\documentclass[psamsfonts]{amsart}


\usepackage[margin=2.5cm]{geometry}
\usepackage[all,arc]{xy}
\usepackage{amsfonts}
\usepackage{amssymb}
\usepackage{amsaddr}
\usepackage{enumerate}
\usepackage{eucal}
\usepackage{tikz}
\usepackage{parskip}
\usepackage{mathtools}


\newtheorem{thm}{Theorem}[section]
\newtheorem{cor}[thm]{Corollary}
\newtheorem{prop}[thm]{Proposition}
\newtheorem{lemma}[thm]{Lemma}

\theoremstyle{definition}
\newtheorem{defn}[thm]{Definition}

\theoremstyle{remark}
\newtheorem{rem}[thm]{Remark}

\makeatletter
\let\c@equation\c@thm
\makeatother
\numberwithin{equation}{section}

\bibliographystyle{plain}


\DeclareMathOperator{\ad}{ad}

\DeclareMathOperator{\en}{End}

\DeclareMathOperator{\Hom}{Hom}

\DeclareMathOperator{\res}{Res}

\DeclareMathOperator{\str}{STr}

\DeclareMathOperator{\sdim}{sdim}

\newcommand{\even}{\overline{0}}
\newcommand{\odd}{\overline{1}}

\newcommand{\C}{\mathbb{C}}

\newcommand{\R}{\mathbb{R}}

\newcommand{\Z}{\mathbb{Z}}

\newcommand{\g}{\mathfrak{g}}
\newcommand{\h}{\mathfrak{h}}

\newcommand{\vac}{{\left|0\right>}}

\newcommand{\twobytwo}[4]
{\left(\begin{smallmatrix} #1 & #2 \\ #3 & #4 \end{smallmatrix}\right)}

\newcommand{\slfrak}{\mathfrak{sl}}
\newcommand{\glfrak}{\mathfrak{gl}}

\newcommand{\ospfrak}{\mathfrak{osp}}
\newcommand{\spfrak}{\mathfrak{sp}}
\newcommand{\sofrak}{\mathfrak{so}}

\setcounter{tocdepth}{1}

\begin{document}
\vspace{8mm}

\begin{center}

\textbf{\Large{Holomorphic integer graded vertex superalgebras}}

\vspace{6mm}
		
{\large Jethro van Ekeren\footnote{\texttt{jethrovanekeren@gmail.com}} and Bely Rodr\'{i}guez Morales\footnote{\texttt{belyto.rodriguez@gmail.com}}}

\vspace{3mm}

\normalsize
{\slshape
Instituto de Matem\'{a}tica e Estat\'{i}stica\\
Universidade Federal Fluminense\\
Niter\'{o}i, RJ 24.210-201, Brazil }

\end{center}

\vspace*{1mm}

\begin{abstract}\noindent 
In this note we study holomorphic $\Z$-graded vertex superalgebras. We prove that all such vertex superalgebras of central charge $8$ and $16$ are purely even. For the case of central charge $24$ we prove that the weight-one Lie superalgebra is either zero, of superdimension $24$, or else is one of an explicit list of 1332 semisimple Lie superalgebras.
\end{abstract}

\title{}

\maketitle

%
%
%
%
%
%
%
%
%

\section{Introduction}

We call a rational vertex algebra or superalgebra \emph{holomorphic} if it possesses a unique irreducible module up to isomorphism, and satisfies some additional technical conditions, detailed in Definition \ref{def:holom} below. The celebrated theorem of Zhu \cite{Zhu} on modularity of characters of vertex algebras imposes highly nontrivial constraints on the structure of holomorphic vertex algebras. For instance the central charge of such a vertex algebra must be a multiple of $8$.

In the remarkable work of Schellekens \cite{Sch93} an explicit, and conjecturally complete, list of 71 candidate holomorphic vertex algebras (there called meromorphic conformal field theories) of central charge $24$ was given, indexed by the Lie algebra structure of the weight-one space. Indeed let $V$ be such a vertex algebra, then the weight-one space $V_1$ is either $\{0\}$, abelian of dimension $24$, or else is semisimple and its simple components $\g^{(1)}$, $\g^{(2)}$... $\g^{(r)}$ satisfy
\begin{align}\label{eq:h.k.eq.intro}
\frac{h_i^\vee}{k_i} = \frac{\dim(V_1)-24}{24}, \quad \text{for $i = 1, 2, \ldots, r$}
\end{align}
(here $h_i^\vee$ is the dual Coxeter number of $\g^{(i)}$ and $k_i \in \Z_{\geq 1}$ is the level of the corresponding affine vertex subalgebra of $V$). This has since been proven as a theorem of vertex algebras \cite{DM04} \cite{DM06} (see also \cite{Tuite}).

The first step in Schellekens' classification consisted in identifying the solutions to \eqref{eq:h.k.eq.intro}, which turn out to be 221 in number. The reduction to the final list of 71 relied on the solution of complicated integer programming problems (see also \cite{EMS20}), though recently in \cite{ELMS21} a more conceptual approach to this reduction has been given, based on orbifolds of the Leech lattice vertex algebra and Kac's very strange formula.

Our purpose is to generalise \eqref{eq:h.k.eq.intro} above to the context of $\Z$-graded vertex superalgebras and to use it to put constraints on possible Lie superalgebra structures of their weight-one spaces in central charge $24$. The results are sumarised in Theorem \ref{thm:main.24} in which we determine all possible Lie superalgebra structures on $V_1$ outside the case $\sdim(V_1)=24$. We also investigate central charges $c=8$ and $c=16$, ruling out the existence of non purely even holomophic $\Z$-graded vertex superalgebras in these cases (Theorem \ref{thm:main.8.16}).

The most commonly studied class of vertex superalgebras are those which are $(1/2)\Z$-graded, or more precisely those for which even elements are required to have integer degree and odd elements half-odd-integer degree. The lattice vertex superalgebra associated with an odd lattice, with its standard conformal structure is of this type for example. 

The class of $\Z$-graded vertex superalgebras is much less studied, though there are plenty of examples: vertex superalgebras associated with untwisted affine Lie superalgebras, odd lattice vertex superalgebras with modified conformal structure (though these examples are typically not self-contragredient), certain $W$-superalgebras, among others.

The question of classification of holomorphic $(1/2)\Z$-graded vertex superalgebras has been considered (in \cite{CDR2017} for example), but is rather different from the $\Z$-graded case. The first main difference is that the weight-one space of a $(1/2)\Z$-graded vertex superalgebra is always a purely even Lie algebra, so the representation theory of Lie superalgebras does not enter. The second main difference is that the modular properties of characters in the $(1/2)\Z$-graded case are more complicated than in the $\Z$-graded case.

The results obtained in this note are partial, not least because the representation theory of Lie superalgebras is significantly harder than that of Lie algebras. We nevertheless hope that they serve as a starting point for further work on the structure of this understudied class of algebras.

\emph{Acknowledgements} The authors would like to thank L. Calixto, T. Creutzig and S. M\"{o}ller for comments on a draft of this work. JvE was supported by CNPq grant 310576/2020-2, FAPERJ grant 211.358/2019, and a grant from the Serrapilheira Institute (grant number Serra - 1912-31433). BRM was supported by a PNPD postdoc from CAPES.

\section{Lie superalgebras and supertrace forms}

We briefly recall some background material on finite dimensional simple Lie superalgebras from \cite{Kac77} and \cite{Kac77sketch}. A Lie superalgebra $\g$ is a vector superspace ($\Z/2$-graded vector space) $\g = \g_{\even} \oplus \g_{\odd}$ with bilinear bracket operation $[\cdot, \cdot] : \g \times \g \rightarrow \g$ satisfying
\[
[b, a] = -p(a, b) [a, b]
\]
and, writing $\ad(a)$ for the operation $x \mapsto [a, x]$,
\[
\ad(a)\ad(b) - p(a, b) \ad(b)\ad(a) = \ad([a, b]).
\]
Here and always, for homogeneous elements $a, b$ of degrees $p(a)$ and $p(b)$ of a vector superspace, $p(a, b)$ denotes $(-1)^{p(a)p(b)}$. If $U$ is a vector superspace then $\mathfrak{gl}(U) = \en(U)$ equipped with the induced grading and
\[
[a, b] = ab - p(a, b) ba
\]
is an example of a Lie superalgebra.

Homomorphisms are $\Z/2$-graded linear maps preserving brackets, and a $\g$-module is a vector superspace $M$ together with a homomorphism $\g \rightarrow \mathfrak{gl}(M)$. Similarly subalgebras and ideals are by definition $\Z/2$-graded. The linear dual of a $\g$-module $M$ is again a $\g$-module with action defined by
\[
[a \cdot f](m) = -p(a, f) f(a \cdot m).
\]
A bilinear form $(\cdot, \cdot) : U \times U \rightarrow \C$ on the vector superspace $U$ is supersymmetric if
\[
(v, u) = p(u, v) (u, v)
\]
for all homogeneous $u, v \in U$. A bilinear form $(\cdot, \cdot)$ on the Lie superalgebra $\g$ is said to be invariant if
\[
([a, x], y) = -p(a, x) (x, [a, y])
\]
for all $a, x, y \in \g$. 
Now suppose that $\g$ is finite dimensional. The Killing form $\kappa : \g \times \g \rightarrow \C$ of $\g$ is defined by
\[
\kappa(x, y) = \str_{\g} \ad(x) \ad(y).
\]
This form is supersymmetric and invariant, and is also ``consistent'', which means that $\kappa(\g_{\even}, \g_{\odd}) = 0$.

As in the theory of Lie algebras, a Lie superalgebra is said to be simple if it contains no nontrivial ideal, and semisimple if it contains no nontrivial solvable ideal, where the notion of solvable is defined as for ordinary Lie algebras.

An invariant bilinear form on a simple Lie superalgebra, if it exists, is unique up to scaling \cite[Proposition 1.2.6]{Kac77} and is either nondegenerate or zero. Unlike the case of ordinary Lie algebras, the Killing form of a simple Lie superalgebra need not be nondegenerate. This makes the general theory much less well behaved than the purely even case. Nevertheless one has:
\begin{prop}[{\cite[Proposition 2.3.3]{Kac77}}]\label{prop.nondeg=good}
Let $\g$ be a finite dimensional Lie superalgebra whose Killing form is nondegenerate. Then $\g$ is isomorphic to a direct sum of simple Lie superalgebras, each with nondegenerate Killing form.
\end{prop}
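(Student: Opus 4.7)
The plan is to adapt the classical Lie algebra argument, the essential new ingredient being a super version of Cartan's criterion for solvability. The basic tools are invariance and nondegeneracy of $\kappa$: for any $\Z/2$-graded ideal $I \subseteq \g$, the orthogonal complement $I^\perp = \{x \in \g : \kappa(x, I) = 0\}$ is again an ideal (by invariance), and one has $\dim I + \dim I^\perp = \dim \g$ together with $(I^\perp)^\perp = I$ (by nondegeneracy). A first step is to show that $\g$ has no nonzero abelian ideal: if $I$ were such and $a \in I$, $x \in \g$ were homogeneous, then $\ad(a)\ad(x)$ would take $\g$ into $I$ and square to zero (since $\ad(a)$ annihilates $I$), hence as a nilpotent $\Z/2$-graded endomorphism it would have vanishing supertrace, giving $\kappa(a, x) = 0$ for all $x$ and thus $a = 0$. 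Consequently $\g$ contains no nonzero solvable ideal, since the last nonzero term in the derived series of such an ideal is an abelian ideal of $\g$ (derived subalgebras of ideals being ideals, by super Jacobi); in particular $Z(\g) = 0$.

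Next I would pick a minimal nonzero ideal $I \subseteq \g$. The graded subspace $I \cap I^\perp$ is an ideal contained in $I$, so by minimality is either $0$ or $I$. In the second case $\kappa$ vanishes identically on $I$, and minimality forces $[I, I]$ to be $0$ or $I$: the first alternative makes $I$ abelian, impossible by the previous paragraph, while the second, via a super version of Cartan's criterion applied to $\ad(I) \subseteq \mathfrak{gl}(\g)$, forces $\ad(I)$ to be solvable, hence zero (being also perfect), whence $I \subseteq Z(\g) = 0$, a contradiction. Therefore $I \cap I^\perp = 0$ and $\g = I \oplus I^\perp$ as a direct sum of ideals, using $[I, I^\perp] \subseteq I \cap I^\perp = 0$. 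Any ideal of $I$ is automatically an ideal of $\g$, so minimality makes $I$ simple; moreover, a direct supertrace computation (using that $\ad_\g(x)\ad_\g(y)$ for $x, y \in I$ maps $\g$ into $I$) gives $\kappa_I = \kappa_\g|_{I \times I}$, nondegenerate because $I \cap I^\perp = 0$. Induction on dimension applied to $I^\perp$, which has nondegenerate Killing form by the same reasoning, yields the claimed decomposition.

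The main obstacle is the super Cartan criterion invoked above. Although proved in \cite{Kac77}, its justification is more delicate than in the classical case due to subtler behaviour of the supertrace and of the Jordan decomposition in the graded setting; indeed, its failure on certain non-diagonalizable graded operators is precisely the reason that several simple Lie superalgebras (such as $\mathfrak{psl}(n|n)$, $\mathfrak{p}(n)$, $\mathfrak{q}(n)$) can possess vanishing Killing form—the examples that the nondegeneracy hypothesis is designed to exclude.
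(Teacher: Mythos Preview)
The paper does not supply a proof of this proposition; it simply cites Kac. Your overall strategy---rule out abelian ideals via nilpotence of $\ad(a)\ad(x)$, take a minimal nonzero ideal $I$, show $I \cap I^\perp = 0$, split $\g = I \oplus I^\perp$, and induct---is the standard one and is correct in outline. The gap is in your handling of the case $I \subseteq I^\perp$: the ``super Cartan criterion'' you invoke is \emph{false}. For $n \geq 2$ the Lie superalgebra $\mathfrak{psl}(n|n)$ is simple, hence perfect with trivial centre, so $L = \ad(\mathfrak{psl}(n|n)) \subseteq \mathfrak{gl}(\mathfrak{psl}(n|n))$ satisfies $L = [L,L]$ and $\str(XY) = \kappa(x,y) = 0$ for all $X = \ad(x)$, $Y = \ad(y) \in L$, yet $L$ is not solvable. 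Your final paragraph in effect concedes this (``its failure on certain non-diagonalizable graded operators''), which is inconsistent with asserting the criterion is ``proved in \cite{Kac77}''.

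The repair is much more elementary than any Cartan-type criterion and uses only invariance of $\kappa$. Suppose $\kappa(I,I) = 0$. For homogeneous $a, b \in I$ and arbitrary $c \in \g$ one has, by the invariance identity $([a,b],c) = -p(a,b)\,(b,[a,c])$ and the fact that $[a,c] \in I$, that $\kappa([a,b],c) \in \kappa(I,I) = 0$. Thus $[I,I]$ lies in the radical of $\kappa$, which is zero by hypothesis, so $I$ is abelian---contradicting your first step. With this one-line replacement for the appeal to Cartan, the remainder of your argument (simplicity of $I$, $\kappa_I = \kappa_\g|_{I \times I}$, induction on $I^\perp$) goes through unchanged.
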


A simple Lie superalgebra $\g$ is said to be of \emph{classical type} if the induced representation of $\g_{\even}$ on $\g_{\odd}$ is completely reducible.
\begin{prop}[{\cite[Proposition 2.3.6]{Kac77}}]
Let $\g$ be a finite dimensional simple Lie superalgebra. If the Killing form of $\g$ is nondegenerate then $\g$ is of classical type.
\end{prop}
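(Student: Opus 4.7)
The plan is to deduce from nondegeneracy of $\kappa$ two things: (i) $\g_{\even}$ is a reductive Lie algebra, and (ii) $\g_{\odd}$ is a completely reducible $\g_{\even}$-module. Three preliminary observations frame the argument. The stated consistency $\kappa(\g_{\even}, \g_{\odd}) = 0$, combined with nondegeneracy of $\kappa$ on $\g$, implies that $\kappa|_{\g_{\even}}$ and $\kappa|_{\g_{\odd}}$ are each nondegenerate. Supersymmetry forces $\kappa|_{\g_{\odd}}$ to be antisymmetric, i.e.\ a symplectic form on $\g_{\odd}$, while $\kappa|_{\g_{\even}}$ is ordinary symmetric. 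Invariance of $\kappa$, applied with one even and two odd arguments, shows that the adjoint action $\rho \colon \g_{\even} \to \mathfrak{gl}(\g_{\odd})$ lands in the symplectic Lie algebra of $\kappa|_{\g_{\odd}}$, i.e.\ $\rho(x)$ is skew-adjoint for every $x \in \g_{\even}$.

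For (i) I would let $\mathfrak{r}$ be the solvable radical of $\g_{\even}$ and apply Cartan's criterion to the actions of $\mathfrak{r}$ on the finite dimensional representations $\g_{\even}$ (adjoint) and $\g_{\odd}$ (via $\rho$). This yields $\tr_{\g_{\even}}(\ad a\,\ad b) = \tr_{\g_{\odd}}(\rho(a)\rho(b)) = 0$ for all $a \in [\mathfrak{r},\mathfrak{r}]$, $b \in \mathfrak{r}$, and hence $\kappa([\mathfrak{r},\mathfrak{r}], \mathfrak{r}) = 0$. One then argues that any nontrivial nilpotent ideal of $\g_{\even}$ would generate a proper graded $\g$-ideal, which by simplicity of $\g$ must be zero, and concludes that $\mathfrak{r}$ is central in $\g_{\even}$. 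Thus $\g_{\even}$ decomposes as a semisimple part plus an abelian one, i.e.\ it is reductive.

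For (ii) I would proceed by induction on $\dim \g_{\odd}$. Given a nontrivial proper $\g_{\even}$-submodule $W \subseteq \g_{\odd}$, its orthogonal $W^{\perp}$ with respect to the symplectic form $\kappa|_{\g_{\odd}}$ is again a submodule by skew-adjointness. In the generic case $W \cap W^{\perp} = 0$ we have $\g_{\odd} = W \oplus W^{\perp}$ and the induction carries through. The remaining isotropic case, $W_0 := W \cap W^{\perp} \neq 0$, must be excluded by appealing to simplicity of $\g$: the graded $\g$-ideal generated by $W_0$, together with the invariance of $\kappa$, produces a subspace on which $\kappa$ degenerates, contradicting the standing hypothesis. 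Semisimplicity of the action of the central part of $\g_{\even}$ on $\g_{\odd}$ must also be verified, which is where the simplicity of $\g$ enters once more.

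The principal obstacle is this final step, since complete reducibility does not follow in general from the mere existence of a nondegenerate invariant form on a module (as the example of a nilpotent skew-adjoint operator on a symplectic space shows). To close the argument one must genuinely use simplicity of $\g$ and the compatibility of the symmetric bracket $S^2 \g_{\odd} \to \g_{\even}$ with $\kappa$, which ties the representation theory of $\g_{\even}$ on $\g_{\odd}$ back to the structure of $\g$ itself.
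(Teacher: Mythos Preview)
The paper does not supply its own proof of this proposition; it is quoted verbatim from \cite{Kac77} with no argument, so there is nothing in the present text to compare against.

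Your overall strategy---show $\g_{\even}$ reductive, then deduce complete reducibility of $\g_{\odd}$---is the right one and is essentially Kac's. But both halves of your sketch contain genuine gaps. In part (i), the assertion that a nontrivial nilpotent ideal of $\g_{\even}$ generates a \emph{proper} graded $\g$-ideal is backwards: simplicity of $\g$ forces any nonzero $\g$-ideal to be all of $\g$, so no contradiction can be drawn this way. What does work is a strengthening of your trace-form step: for any finite-dimensional $\g_{\even}$-module $V$ one has $\tr_V(\rho(a)\rho(b))=0$ for all $a\in[\g_{\even},\mathfrak r]$ and all $b\in\g_{\even}$ (not merely $b\in\mathfrak r$); this is the standard fact that $[\g_{\even},\g_{\even}]\cap\mathfrak r$ lies in the radical of every trace form. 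Applied to $V=\g_{\even}$ and $V=\g_{\odd}$ this gives $\kappa([\g_{\even},\mathfrak r],\g_{\even})=0$, whence $[\g_{\even},\mathfrak r]=0$ directly from nondegeneracy of $\kappa|_{\g_{\even}}$.

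In part (ii) you already flag the obstacle honestly, and indeed your proposed mechanism does not close: the $\g$-ideal generated by an isotropic $\g_{\even}$-submodule $W_0\subset\g_{\odd}$ is again all of $\g$, so it cannot exhibit degeneracy of $\kappa$. The usual route instead exploits (i): once $\g_{\even}$ is reductive, Weyl's theorem handles its semisimple part, and it remains to show that the center $Z(\g_{\even})$ acts semisimply on $\g_{\odd}$. For $z\in Z(\g_{\even})$ the nilpotent Jordan component of $\ad z$ is again an even derivation of $\g$; nondegeneracy of $\kappa$ forces $\der(\g)=\ad\g$, so this component equals $\ad w$ for some $w\in Z(\g_{\even})$ with $\ad w$ nilpotent, and a short invariance computation then gives $\kappa(w,\g_{\even})=0$, hence $w=0$. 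Your closing paragraph gestures toward using simplicity here but does not supply this mechanism.
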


The classification of finite dimensional simple Lie superalgebras was achieved by Kac in \cite{Kac77}, and is as follows.
\begin{prop}
{\ }
The (non purely even) finite dimensional simple Lie superalgebras fall into the following classes:
\begin{itemize}
\item (classical type with nondegenerate Killing form) $A(m, n)$ where $m \neq n$, $B(m, n)$, $C(n)$, $D(m, n)$ where $m \neq n-1$, $F(4)$ or $G(3)$.


\item (classical type with vanishing Killing form) $A(n, n)$, $D(n+1, n)$, $D(2, 1; \alpha)$, $P(n)$ and $Q(n)$.


\item (non classical type) the Lie superalgebras of Cartan type $W(n)$, $S(n)$, $H(n)$ and $\widetilde{S}(n)$.
\end{itemize}
\end{prop}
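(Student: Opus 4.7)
The plan is to follow the classification strategy of Kac \cite{Kac77}. I would dichotomize the problem according to whether the adjoint action of $\g_{\even}$ on $\g_{\odd}$ is completely reducible (the \emph{classical} case) or not (the \emph{non-classical}, or Cartan-type, case). The Killing-form information stated in the proposition would then be verified in a separate pass at the end, by combining explicit matrix or differential-operator realisations with Proposition \ref{prop.nondeg=good} and direct computation of the supertrace in each family.

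For the classical case, I would first show that $\g_{\even}$ is reductive. The idea is that if $\mathfrak{r}$ denotes the radical of $\g_{\even}$, then complete reducibility forces $\mathfrak{r}$ to act on $\g_{\odd}$ by semisimple operators, while simplicity of $\g$ together with $[\g_{\odd},\g_{\odd}] \subset \g_{\even}$ forces $\mathfrak{r}$ to be at most one-dimensional and central. With reductivity in hand, I would split further according to whether $\g$ admits a consistent $\Z$-grading $\g = \g_{-1} \oplus \g_0 \oplus \g_1$ (``type I'': $\g_{\odd}$ breaks into two $\g_{\even}$-irreducibles $\g_{\pm 1}$ which are dual via the nondegenerate bracket $\g_{-1} \otimes \g_1 \to \g_0$) or whether $\g_{\odd}$ is already $\g_{\even}$-irreducible (``type II''). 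Classifying the admissible dual pairs in type I and the admissible irreducibles in type II, subject to the integrability condition on the odd-odd bracket coming from the Jacobi identity, yields precisely $A(m,n)$, $B(m,n)$, $C(n)$, $D(m,n)$, $D(2,1;\alpha)$, $F(4)$, $G(3)$, $P(n)$ and $Q(n)$.

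For the non-classical case, the approach is to equip $\g$ with the natural decreasing filtration $\g = L_{-1} \supset L_0 \supset L_1 \supset \cdots$ associated to a maximal subalgebra $L_0$ containing $\g_{\even}$, and to analyse the associated graded Lie superalgebra. One shows that $L_{-1}/L_0$ is a faithful transitive irreducible module for $L_0/L_1$, and that the nonnegative part embeds into the full Cartan prolongation of this pair. A careful analysis of the possible transitive linear actions, together with identification of the prolongation, produces exactly the four Cartan series $W(n)$, $S(n)$, $\widetilde{S}(n)$ and $H(n)$.

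The principal obstacle is the non-classical half: the prolongation and transitive-action arguments are delicate, and excluding sporadic counterexamples requires substantial combinatorial work. In the classical case, by comparison, the reductivity step and the type I / type II split are conceptually clean, but the final enumeration of dual-pair representations in type I is itself nontrivial and is what forces the appearance of the exceptional families $D(2,1;\alpha)$, $F(4)$ and $G(3)$.
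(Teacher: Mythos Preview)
The paper does not give a proof of this proposition at all: it is stated as a citation of Kac's classification theorem from \cite{Kac77}, with the preceding sentence ``The classification of finite dimensional simple Lie superalgebras was achieved by Kac in \cite{Kac77}, and is as follows.'' So there is nothing to compare your argument against in the paper itself.

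That said, your outline is a faithful high-level summary of Kac's original strategy in \cite{Kac77} and \cite{Kac77sketch}: the classical/non-classical dichotomy via complete reducibility of $\g_{\odd}$ over $\g_{\even}$, the reductivity of $\g_{\even}$ in the classical case, the type I/type II split, and the filtration/prolongation analysis in the Cartan-type case. For the purposes of this paper, however, reproducing any of this would be out of place; the proposition is background material, and a bare citation is the appropriate treatment.
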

Here
\begin{align*}
A(m, n) = \begin{dcases}
\slfrak(m+1 \mid n+1) & \text{for $m, n \geq 0$ and $m \neq n$} \\
\slfrak(m+1 \mid m+1) / \C I & \text{for $m = n \geq 1$} \\
\end{dcases}
\end{align*}
where
\begin{align*}
\slfrak(m \mid n) = \{X \in \glfrak(\C^{m|n}) \mid \str(X) = 0\}.
\end{align*}

The Lie superalgebra $\slfrak(m \mid n)$ contains subalgebras isomorphic to $\slfrak(m)$ and $\slfrak(n)$, namely the subalgebras of endomorphisms of the even and odd parts of $\C^{m|n}$. If $(\cdot, \cdot)$ denotes the supertrace form of the defining representation $\C^{m \mid n}$, and $\kappa(\cdot, \cdot)$ the Killing form then $\kappa(\cdot, \cdot) = 2(m-n) (\cdot, \cdot)$. This form is positive definite in $\slfrak(m)$ and negative definite in $\slfrak(n)$.

The families of simple Lie superalgebras $B(m, n)$, $C(n)$ and $D(m, n)$ are all particular cases of the orthosymplectic Lie superalgebras, whose definition we now recall. Consider $U = \C^{M|2n}$, equipped with $(\cdot, \cdot)$ a nondegenerate consistent supersymmetric bilinear form (i.e., symmetric on $U_{\even}$ and skew-symmetric on $U_{\odd}$). Let
\begin{align*}
\ospfrak(M \mid 2n)_{\even} &= \{ X \in \glfrak(U)_{\even} \mid (Xu, v) = -(u, Xv) \}. \\
\ospfrak(M \mid 2n)_{\odd} &= \{ X \in \glfrak(U)_{\odd} \mid (Xu, v) = -(-1)^{p(u)} (u, Xv) \}.
\end{align*}
Then by definition
\begin{align*}
B(m, n) &= \ospfrak(2m+1 \mid 2n), \qquad m \geq 0, n \geq 1 \\
C(n) &= \ospfrak(2 \mid 2n-2), \qquad n \geq 2, \\
\text{and} \quad D(m, n) &= \ospfrak(2m \mid 2n), \qquad m \geq 2, n > 0.
\end{align*}
Also $\ospfrak(0 \mid 2n) = \spfrak(2n)$ and $\ospfrak(M \mid 0) = \sofrak(M)$, and in general
\[
\ospfrak(M \mid 2n)_{\even} \cong \sofrak(M) \oplus \spfrak(2n)
\]
where $\sofrak(M)$ has its usual meaning for $M \geq 3$, while $\sofrak(0) = \sofrak(1) = 0$ and $\sofrak(2) = \C$. In the same way as for $\slfrak(m \mid n)$, the restrictions of the Killing form of $\ospfrak(M \mid 2n)$ are seen to be positive definite on $\sofrak(M)$ and negative definite on $\spfrak(2n)$.

Finally $F(4)_{\even} = A_1 \oplus B_3$ and $G(3)_{\even} = A_1 \oplus G_2$, and in both these cases the restrictions of the Killing form to the two components of the even part are respectively positive and negative definite.

\begin{defn}
A simple Lie superalgebra $\g = \g_{\even} \oplus \g_{\odd}$ is said to be \emph{basic} if $\g_{\even}$ is reductive, and if $\g$ possesses a nondegenerate even invariant bilinear form.
\end{defn}
The basic Lie superalgebras consist of the classical Lie superalgebras except $P(n)$ and $Q(n)$.
\begin{defn}[{\cite[Section 2]{KW1994}}]
The \emph{defect} of a basic simple Lie superalgebra $\g$ with root system $\Delta$, and in particular of a simple superalgebra with nondegenerate Killing form, is the dimension of a maximal isotropic subspace of $\R \Delta$.
\end{defn}
From the remarks above it follows that the only simple Lie superalgebras with defect equal to $0$ are the purely even ones and $B(0, n)$. Furthermore:
\begin{lemma}\label{lem:defect.root.norm}
Let $\g$ be a simple Lie superalgebra with nondegenerate Killing form, different than $B(0, n)$ and not purely even. Then $\g$ possesses an even root of positive norm and an even root of negative norm.
\end{lemma}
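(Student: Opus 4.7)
The plan is to proceed by case analysis over the classification of simple Lie superalgebras with nondegenerate Killing form recalled above, namely $A(m,n)$ with $m \neq n$, $B(m,n)$, $C(n)$, $D(m,n)$ with $m \neq n-1$, $F(4)$, and $G(3)$. The guiding structural principle is that for most of these, the even part $\g_{\even}$ decomposes as a direct sum of two semisimple ideals on which the restriction of the Killing form of $\g$ has opposite signatures; picking any root from each then yields an even root of each sign, as required.

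Concretely, for $\g = A(m,n)$ with $m, n \geq 1$ I would use the inclusion $\slfrak(m+1) \oplus \slfrak(n+1) \subseteq \g_{\even}$, together with the identity $\kappa_\g = 2(m-n)(\cdot,\cdot)$ and the sign of the supertrace form on each factor (positive on $\slfrak(m+1)$, negative on $\slfrak(n+1)$), both recalled earlier. Since $m \neq n$, the restrictions of $\kappa_\g$ to the real spans of the two Cartans have opposite signs, and any root of each $\slfrak$ factor gives the required pair. For $\g = B(m, n)$ with $m, n \geq 1$ and $\g = D(m,n)$ with $m \geq 2$, $n \geq 1$, $m \neq n-1$, the analogous formula for the Killing form of $\ospfrak(M|2n)$ and the decomposition $\g_{\even} \supseteq \sofrak(M) \oplus \spfrak(2n)$ yield the same conclusion. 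The exceptional cases $F(4)$ and $G(3)$ are immediate from the explicit description of $\g_{\even}$ as $A_1 \oplus B_3$ and $A_1 \oplus G_2$ respectively, whose two factors carry restrictions of the Killing form of opposite sign by the earlier observation.

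The main obstacle lies in the boundary cases in which one of the two would-be semisimple factors of $\g_{\even}$ either vanishes or degenerates to an abelian summand. This is exactly what happens for the excluded family $B(0, n)$, but it also happens for $C(n) = \ospfrak(2 \mid 2n-2)$, whose $\sofrak(2)$ factor is abelian, and (if they are not treated separately) for $A(m, 0)$ and $A(0, n)$, where $\slfrak(1) = 0$. For these the uniform two-factor argument fails, and one must verify the claim by a more refined inspection of the root system, possibly invoking small-rank isomorphisms among these algebras or tracking the signature of the invariant form on the full real root span more carefully. This is the step I expect to require the most care.
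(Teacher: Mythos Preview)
Your two-factor argument is sound for the generic families $A(m,n)$ with $m,n\geq 1$, $B(m,n)$ with $m\geq 1$, $D(m,n)$, $F(4)$ and $G(3)$: in each of these $\g_{\even}$ has two semisimple summands on which the Killing form restricts with opposite sign, and a root from each summand gives the desired pair. For comparison, the paper does not argue this way at all; its proof is the single sentence ``The relative norms of simple roots are listed in \cite[Table~6.1]{KW2001},'' deferring entirely to that table.

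The boundary cases you flag, however, are not merely delicate---they are genuine obstructions, and your proposed remedy of a ``more refined inspection of the root system'' cannot succeed. In $A(m,0)=\slfrak(m{+}1\mid 1)$ the even part is isomorphic to $\glfrak(m{+}1)$, so the even roots of $\g$ are exactly the roots $\varepsilon_i-\varepsilon_j$ of $\slfrak(m{+}1)$, all of the same norm; the same occurs for $A(0,n)$. In $C(n)=\ospfrak(2\mid 2n{-}2)$ the abelian $\sofrak(2)$ factor contributes no roots, so every even root lies in $\spfrak(2n{-}2)$ and again all have the same sign. No choice of simple system or small-rank isomorphism can manufacture an even root of the opposite sign here, because the set of even roots is intrinsic. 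You have correctly located a real gap: rather than trying to patch these cases within the lemma as stated, consult the table cited by the paper and examine how the lemma is actually invoked in the proof of Proposition~\ref{prop:C2.implies.integ} to see what statement is really needed.
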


\begin{proof}
The relative norms of simple roots are listed in {\cite[Table 6.1]{KW2001}}.
\end{proof}

\section{Affine Lie superalgebras}\label{sec:affine.Lie}

Let $\g$ be a finite dimensional Lie superalgebra and $(\cdot, \cdot) : \g \times \g \rightarrow \C$ an even invariant bilinear form. We may form the corresponding (untwisted) affine Lie superalgebra
\begin{align*}
\widehat{\g} = \g[t, t^{-1}] \oplus \C K,
\end{align*}
\begin{align*}
[a t^m, bt^n] = [a, b]t^{m+n} + m \delta_{m, -n} (a, b) K, \qquad [K, \widehat{\g}] = 0.
\end{align*}
The universal vacuum module, of level $k \in \C$, is by definition the $\widehat{\g}$-module
\[
V^k(\g) = U(\widehat{\g}) \otimes_{U(\g[t] \oplus \C K)} \C\vac
\]
induced from the one-dimensional module $\C \vac$ in which $K \vac = k \vac$ and $\g[t] \vac = 0$.

Now we suppose that $\g$ is simple with nondegenerate Killing form $\kappa$ and root space decomposition $\g = \h \oplus \bigoplus_{\alpha \in \Delta} \g_\alpha$. The form $(\cdot, \cdot)$ induces a nondegenerate form on $\h^*$ which we also denote $(\cdot, \cdot)$. Simple calculation yields the following classical fact.
\begin{lemma}\label{lem:sing.vec}
Let $\g_{-\alpha} = \C f$ for some even root $\alpha$, and suppose that $f_{-1}^m \vac$ is a singular vector in $V^k(\g)$. Then
\begin{align*}
k = (m-1) \tfrac{(\alpha, \alpha)}{2}.
\end{align*}

\end{lemma}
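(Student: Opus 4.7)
The plan is a direct computation testing the singular-vector condition against the positive mode $e_1$, where $e \in \g_\alpha$ is a nonzero root vector opposite to $f$. Since $\alpha$ is an even root, $\g_\alpha$ and $\g_{-\alpha}$ are one-dimensional, and invariance of the form together with nondegeneracy on $\h$ give $[e, f] = (e, f) t_\alpha$, where $t_\alpha \in \h$ is the element dual to $\alpha$ under the form (so that $\alpha(t_\alpha) = (\alpha, \alpha)$). Invariance likewise forces the pairing $\g_\alpha \times \g_{-\alpha} \to \C$ to be nondegenerate, so $(e, f) \neq 0$.

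By definition a singular vector in $V^k(\g)$ is annihilated by $\g \otimes t\C[t]$, and in particular by $e_1$. I would compute $e_1 \cdot f_{-1}^m \vac = [e_1, f_{-1}^m]\vac$ (using $e_1 \vac = 0$), expanding the commutator as a telescoping sum and substituting the two relations
\[
[e_1, f_{-1}] = (e, f)(t_\alpha)_0 + (e, f) K \quad \text{and} \quad [(t_\alpha)_0, f_{-1}] = -(\alpha, \alpha) f_{-1},
\]
both immediate from the affine bracket formula recorded above.

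A short induction pushing $(t_\alpha)_0$ through each power of $f_{-1}$ and using $(t_\alpha)_0 \vac = 0$, $K \vac = k \vac$ collapses the expression to a multiple of $f_{-1}^{m-1}\vac$; summing the resulting arithmetic progression yields $(e, f)\bigl[mk - \binom{m}{2}(\alpha, \alpha)\bigr] f_{-1}^{m-1}\vac$. Since $f_{-1}^{m-1}\vac \neq 0$ in $V^k(\g)$ (immediate from PBW) and $(e, f) \neq 0$, the singularity condition forces $mk = \binom{m}{2}(\alpha, \alpha)$, i.e.\ $k = (m-1)(\alpha, \alpha)/2$. There is essentially no obstacle: the argument is the standard $\widehat{\slfrak}_2$ computation applied to the even $\slfrak_2$-subalgebra associated with the root $\alpha$, and the parity hypothesis on $\alpha$ serves only to keep sign factors from the super structure out of the telescoping sum.
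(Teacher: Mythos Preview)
Your argument is correct and follows essentially the same approach as the paper: both apply $e_1$ to $f_{-1}^m\vac$, use the commutator relations in $\widehat{\g}$ together with $[e,f] = (e,f)\,t_\alpha$ (the paper writes this as $h = [e,f]$ and computes $\alpha(h)$), and obtain by induction the coefficient $m(e,f)\bigl(k - \tfrac{m-1}{2}(\alpha,\alpha)\bigr)$ in front of $f_{-1}^{m-1}\vac$. Your version is slightly more direct in that it works from the outset with the form $(\cdot,\cdot)$ defining the affine bracket, whereas the paper briefly passes through the Killing form before reducing to the same identity.
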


\begin{proof}
Let $e \in \g_\alpha$ and put $h = [e, f] \in \h$. Since $\kappa$ is nondegenerate, we may write $\nu : \h \rightarrow \h^*$ for the induced map, and we have
\[
[e, f] = \kappa(e, f) \nu^{-1}(\alpha).
\]
By induction
\begin{align*}
e_1 f_{-1}^m \vac = (m (e, f) K - \tfrac{m(m-1)}{2} \alpha(h)) f_{-1}^{m-1} \vac.
\end{align*}

Since $\kappa(x, y) = \epsilon \cdot (x, y)$ for some constant $\epsilon$, if we write $\rho : \h \rightarrow \h^*$ for the map $x \mapsto (x, -)$ then $\nu = \epsilon \rho$. We have $(\alpha, \beta) = (\rho^{-1}(\alpha), \rho^{-1}(\beta))$. The singular vector condition above immediately becomes
\begin{align*}
K - (m-1) \tfrac{(\alpha, \alpha)}{2} = 0.
\end{align*}


\end{proof}

\section{Vertex superalgebras}

In this section we summarize definitions and basic results pertaining to vertex superalgebras.
\begin{defn}\label{def.vsa}
A vertex superalgebra consists of a vector superspace $V$, a distinguished vector $\vac \in V_{\even}$ called the vacuum vector, an even linear endomorphism $T \in \en(V)$ and, for every $a \in V$, a quantum field
\[
Y(a, z) = \sum_{n \in \Z} a_{(n)} z^{-n-1},
\]
i.e., a series $Y(a, z) \in V[[z, z^{-1}]]$ such that for all $b \in V$ one has $Y(a, z)b \in V((z))$. These data are to satisfy 
\begin{itemize}
\item $Y(\vac, z) = \text{id}_V$, also $Y(a, z) \vac \in V[[z]]$ and $Y(a, z)\vac|_{z=0} = a$,

\item $T \vac = 0$ and $[T, Y(a, z)] = \partial_z Y(a, z)$,

\item For all $a, b \in V$, and $n \in \Z$,
\[
Y(a, z) Y(b, w) i_{z, w} (z-w)^n - p(a, b) Y(b, w) Y(a, z) i_{w, z} (z-w)^n = \sum_{j \in \Z_+} Y(a_{(n+j)}b, w) \frac{\partial_w^j \delta(z, w)}{j!}.
\]
\end{itemize}
\end{defn}
Here $\delta(z, w)$ denotes the formal series $\sum_{n \in \Z} w^n z^{-n-1}$, and $i_{z, w}$ resp. $i_{w, z}$ denotes the operation of series expansion in positive powers of $w$, resp. $z$. Two useful consequences of the axioms are the skew symmetry formula
\[
Y(b, z)a = p(a, b) e^{zT} Y(a, -z)b,
\]
and the commutator formula
\[
[Y(a, z), Y(b, w)] = \sum_{j \in \Z_+} Y(a_{(j)}b, w) \frac{\partial_w^{j} \delta(z, w)}{j!}.
\]

\begin{defn}
A conformal structure on the vertex superalgebra $V$ of central charge $c$ consists of a vector $\omega \in V$ whose modes $L_n = \omega_{(n+1)}$ satisfy the relations of the Virasoro algebra at central charge $c$, i.e.,
\[
[L_m, L_n] = (m-n) L_{m+n} + \delta_{m, -n} \frac{m^3-m}{12} c \,\, \text{id}_V,
\]
the action of $L_0$ on $V$ is semisimple with finite dimensional eigenspaces (we write $V_n$ for the eigenspace with eigenvalue $n$), and $L_{-1} = T$. 
\end{defn}

\begin{defn}
The conformal vertex superalgebra $V$ is said to be of conformal field theory type (CFT type) if $V_n = 0$ for $n < 0$ and $V_0 = \C \vac$.
\end{defn}
The most commonly considered type of conformal vertex superalgebra is that for which $V = \bigoplus_{n \in (1/2)\Z_+} V_n$ with $V_n \subset V_{\even}$ for $n \in \Z$ and $V_n \subset V_{\odd}$ for $n \in 1/2+\Z$. In this article we instead focus attention on $\Z$-graded vertex superalgebras $V = \bigoplus_{n \in \Z_+} V_n$ where $V_n = V_{n, \even} \oplus V_{n, \odd}$ for each $n \in \Z_+$.

The following lemma is well known.
\begin{lemma}
Let $V$ be a conformal vertex superalgebra ($\Z$-graded or not). The restriction of the product $a \otimes b \mapsto a_{(0)}b$ to $V_1 \subset V$ descends to the quotient $V_1 / TV_0$, and equips the latter with the structure of a Lie superalgebra. In particular if $V$ is of CFT type then $V_1$ is a Lie superalgebra.
\end{lemma}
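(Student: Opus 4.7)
The plan is to verify that the binary operation $(a,b) \mapsto a_{(0)}b$ descends to the quotient $V_1/TV_0$ and then that the resulting bracket is super-skew-symmetric and satisfies the super Jacobi identity. The $\Z_{\geq 0}$-grading will be used crucially to cut off all but finitely many terms in the skew-symmetry formula.

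Step 1. From $[T, Y(a,z)] = \partial_z Y(a,z)$, matching coefficients of $z^{-n-1}$ gives $[T, a_{(n)}] = -n \, a_{(n-1)}$ and in particular $(Ta)_{(n)} = -n \, a_{(n-1)}$. Setting $n = 0$ yields $[T, a_{(0)}] = 0$ and $(Ta)_{(0)} = 0$. This is the central computation: the first identity shows that if $b \in TV_0$, say $b = Tw$ with $w \in V_0$, then $a_{(0)}b = T(a_{(0)}w) \in TV_0$ (since $a_{(0)}$ preserves the grading and hence $a_{(0)}w \in V_0$); the second identity shows that if $a \in TV_0$ then $a_{(0)}b = 0$. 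Therefore the operation descends to a well-defined even bilinear product on $V_1/TV_0$.

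Step 2. For skew-supersymmetry, apply the skew-symmetry formula $Y(b,z)a = p(a,b) e^{zT} Y(a,-z)b$ with $a, b \in V_1$ and extract the coefficient of $z^{-1}$. Expanding $(-z)^{-m-1} = (-1)^{m+1} z^{-m-1}$ and $e^{zT} = \sum_{k \geq 0} T^k z^k / k!$, the coefficient of $z^{-1}$ on the right-hand side is
\begin{equation*}
p(a,b) \sum_{m \geq 0} \frac{(-1)^{m+1}}{m!} T^m a_{(m)} b.
\end{equation*}
For $a, b \in V_1$ the element $a_{(m)}b$ has weight $1 - m$, so $a_{(m)}b = 0$ for $m \geq 2$ by the $\Z_{\geq 0}$-grading assumption, while $a_{(1)}b \in V_0$. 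Thus
\begin{equation*}
b_{(0)}a = -p(a,b)\, a_{(0)}b + p(a,b)\, T(a_{(1)}b),
\end{equation*}
and the last term lies in $TV_0$. Passing to the quotient gives $[b,a] = -p(a,b)[a,b]$.

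Step 3. For the super Jacobi identity, apply the commutator formula with $m = n = 0$. Only the $j = 0$ term survives (since $\binom{0}{j} = 0$ for $j \geq 1$), yielding
\begin{equation*}
a_{(0)} b_{(0)} - p(a,b)\, b_{(0)} a_{(0)} = (a_{(0)}b)_{(0)}
\end{equation*}
as operators on $V$. Evaluating on $c \in V_1$ and passing to the quotient gives exactly $[a,[b,c]] - p(a,b)[b,[a,c]] = [[a,b],c]$.

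There is no real obstacle here; the one point deserving care is the use of the nonnegative $\Z$-grading to truncate the infinite sum in the skew-symmetry formula to a single correction term lying in $TV_0$. Under the CFT hypothesis $V_0 = \C\vac$ one has $TV_0 = 0$, so $V_1$ itself becomes a Lie superalgebra.
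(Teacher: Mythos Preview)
The paper records this lemma as ``well known'' and supplies no proof, so there is no argument to compare against. Your proof is correct and is the standard one.

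One small refinement is worth making. In Step~2 you invoke a $\Z_{\geq 0}$-grading to force $a_{(m)}b = 0$ for $m \geq 2$, and you flag this as ``the one point deserving care''. But the lemma is stated for a conformal vertex superalgebra ``($\Z$-graded or not)'', and the paper's definition of conformal structure does not require the $L_0$-spectrum to be bounded below. Fortunately the positivity hypothesis is unnecessary: the sum $\sum_{m \geq 0} \tfrac{(-1)^{m+1}}{m!} T^m a_{(m)}b$ is finite in any case by the field axiom $Y(a,z)b \in V((z))$, and for each $m \geq 1$ one has $T^m a_{(m)}b = T\bigl(T^{m-1} a_{(m)}b\bigr)$ with $T^{m-1} a_{(m)}b \in V_{0}$, so every correction term already lies in $TV_0$. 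With this adjustment your argument covers the lemma in the generality stated.
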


Let $V$ be a vertex superalgebra. A weak $V$-module is a vector superspace equipped with an operation $Y^M(a, z) \in \en{M}[[z, z^{-1}]]$ satisfying fairly evident analogs of the formulas in Definition \ref{def.vsa}. Admissible $V$-modules are weak $V$-modules satisfying additional grading and finiteness restrictions, we refer the reader to \cite{DLM-rational-regular} for the definition. The vertex superalgebra $V$ is said to be \emph{rational} if any admissible $V$-module is isomorphic to a direct sum of irreducible admissible $V$-modules.

The following notion of $C_2$-cofiniteness is an important technical hypothesis in Zhu's modularity theorem, to be be discussed below. 
\begin{defn}
The vertex superalgebra $V$ is said to be $C_2$-cofinite if the subspace $V_{(-2)}V$, spanned by elements of the form $a_{(-2)}b$, has finite codimension in $V$. The quotient $R_V = V / V_{(-2)}V$, equipped with $ab = a_{(-1)}b$ and $\{a, b\} = a_{(0)}b$ is a Poisson superalgebra. 
\end{defn}

The notion of invariant bilinear form on a $\Z$-graded vertex algebra goes back to \cite{B86} and such forms were studied in detail by Li in \cite{Li94}. We now review these results, or rather their natural extension to the context of $\Z$-graded vertex superalgebras.
\begin{defn}
Let $V$ be a $\Z$-graded conformal vertex superalgebra. A bilinear form $(\cdot, \cdot)$ on $V$ is said to be invariant if
\begin{align}\label{eq.def:invar.form.VA}
(Y(a, z)u, v) = p(a, u) (u, Y(e^{z L_1}(-z^{-2})^{L_0} a, z^{-1}) v)
\end{align}
for all homogeneous $a, u, v \in V$.
\end{defn}
If $V_0 = \C \vac$ then an example of an invariant bilinear form on $V$ is $\left<\cdot, \cdot\right>$ defined by
\begin{align}\label{eq:std.form}
\left<u, v\right> \vac = -\res_z z^{-1} Y(e^{z L_1}(-z^{-2})^{L_0} u, z^{-1}) v.
\end{align}
If we assume that $L_1 V_1 = 0$ then for $u, v \in V_1$ we have
\[
\left<u, v\right> = u_{(1)}v.
\]
\begin{prop}
Let $V$ be a $\Z$-graded conformal vertex superalgebra.
\begin{itemize}
\item An invariant bilinear form $(\cdot, \cdot)$ on $V$ is supersymmetric, i.e.,
\[
(u, v) = p(u, v) (v, u).
\]

\item Let $M$ be a $V$-module, and $M^{\text{v}} \subset M$ its set of \emph{vacuum-like vectors}, i.e.,
\begin{align*}
M^{\text{v}} &= \{m \in M \mid \text{$a_{(n)}m = 0$ for all $a \in V$ and $n \in \Z_{\geq 0}$} \} \\
&= \{m \in M \mid L_{-1} m = 0\}.
\end{align*}
The space $\Hom_V(V, M)$ of morphisms of $V$-modules from $V$ to $M$ is isomorphic to $M^{\text{v}}$ under the assignment $\psi \mapsto \psi(\vac)$.

\item The space of invariant bilinear forms on $V$ is isomorphic to $(V_0 / L_1 V_1)^*$.
\end{itemize}
\end{prop}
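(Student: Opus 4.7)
My plan is to establish the three items in the order (2), (3), (1), as the later items build on the earlier. For the second bullet, the forward map $\psi \mapsto \psi(\vac)$ lands in $M^{\text{v}}$ because $a_{(n)} \vac = 0$ for $n \geq 0$ by the vacuum axiom, so $a_{(n)} \psi(\vac) = \psi(a_{(n)} \vac) = 0$. The nontrivial direction in the equivalence of the two descriptions of $M^{\text{v}}$ is that $L_{-1}^M m = 0$ forces $a_{(n)}m = 0$ for all $n \geq 0$; here I would invoke the translation covariance $[L_{-1}^M, Y^M(a,z)] = \partial_z Y^M(a,z)$, which applied to $m$ gives the ODE $L_{-1}^M F(z) = \partial_z F(z)$ for $F(z) := Y^M(a, z)m = \sum_k c_k z^k \in M((z))$; in modes this is the recursion $(k+1)c_{k+1} = L_{-1}^M c_k$. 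Starting from $c_k = 0$ for $k$ sufficiently negative, the recursion propagates this vanishing upward through all $k \leq -1$, yielding $F(z) \in M[[z]]$. The inverse assignment $m \mapsto \psi_m$ with $\psi_m(a) := a_{(-1)}m$ is then a module morphism by the commutator formula combined with these vanishings.

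For the third bullet, setting $a = \om$ in \eqref{eq.def:invar.form.VA} (and using $L_1 \om = 0$, $L_0 \om = 2 \om$) gives the adjointness $(L_m u, v) = (u, L_{-m} v)$, so $L_0$ is self-adjoint and $(V_m, V_n) = 0$ for $m \neq n$. Any invariant form thus defines a graded linear map $M_{(\cdot,\cdot)} \colon V \to V' := \bigoplus_n V_n^*$ by $M_{(\cdot,\cdot)}(a)(b) = (a,b)$, and the invariance identity \eqref{eq.def:invar.form.VA} matches precisely the condition that $M_{(\cdot,\cdot)}$ be a morphism of $V$-modules with respect to the standard contragredient structure on $V'$. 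Invariant forms hence biject with $\Hom_V(V, V')$, which by the second bullet bijects with $(V')^{\text{v}}$. Since a module morphism commutes with $L_0$, the vector $M_{(\cdot,\cdot)}(\vac)$ lies in $V_0^*$; on $V'$ one computes $L_{-1}^{V'} f = f \circ L_1$, so the vacuum-like condition on elements of $V_0^*$ is exactly $f(L_1 V_1) = 0$, yielding the bijection with $(V_0 / L_1 V_1)^*$.

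For the first bullet, I introduce the auxiliary form $B(u, v) := p(u,v)(v, u)$ and aim to show $B = (\cdot, \cdot)$. The key input is the identity $\phi_{z^{-1}} \phi_z = \mathrm{id}$ with $\phi_z := e^{zL_1}(-z^{-2})^{L_0}$, which I check by direct calculation: for $a$ of weight $n$ one computes $(-z^2)^{L_0} \phi_z a = e^{-z^{-1} L_1} a$ using $[L_0, L_1] = -L_1$. Applying \eqref{eq.def:invar.form.VA} with $a \to \phi_z a$, $z \to z^{-1}$ and $u \leftrightarrow v$ produces the derived identity $(Y(\phi_z a, z^{-1}) v, u) = p(a, v) (v, Y(a, z) u)$; substituting this into the invariance formula for $B$ and tracking parities (the signs cancel cleanly) confirms that $B$ is invariant. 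Next, specialising \eqref{eq.def:invar.form.VA} to $u = \vac$ with $a \in V_0$ (for which $\phi_z a = a$) and extracting the $z^0$ coefficient gives $(a, v) = (\vac, a_{(-1)} v)$; taking $v = \vac$ yields $(v_0, \vac) = (\vac, v_0)$ for all $v_0 \in V_0$. Hence $(\cdot,\cdot)$ and $B$ correspond under the third-bullet bijection to the same functional $v_0 \mapsto (\vac, v_0)$ on $V_0 / L_1 V_1$, forcing $(\cdot,\cdot) = B$ and establishing supersymmetry. The main obstacle I expect is the parity bookkeeping in the invariance check for $B$ and in the verification that $M_{(\cdot,\cdot)}$ is a module morphism; otherwise the argument follows the vertex algebra case in \cite{Li94} adapted to the super setting.
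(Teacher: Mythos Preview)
Your proposal is correct and follows essentially the same route as the paper, which simply defers to \cite{FHL} and \cite{Li94} and notes that the Koszul sign rule carries those arguments over; you have spelled out those adaptations. One small correction: the claim that $\phi_z a = a$ for $a \in V_0$ presupposes $L_1 V_0 = 0$, which is not among the hypotheses---instead specialise \eqref{eq.def:invar.form.VA} to $u = v = \vac$ with $a$ arbitrary and use the Virasoro adjointness $(L_m\,\cdot\,,\,\cdot\,) = (\,\cdot\,, L_{-m}\,\cdot\,)$ you already established (together with $L_{\pm 1}\vac = 0$) to obtain $(a,\vac) = (\vac,a)$ directly.
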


\begin{proof}
The first part is {\cite[Proposition 5.3.6]{FHL}} for the case of $\Z$-graded vertex algebras. The same calculation, adapted to the supersymmetric case via the Koszul rule of signs, yields the claim. The second and third claims are similarly obtained by adapting, respectively, Proposition 3.4 and Theorem 3.1 of \cite{Li94} to the supersymmetric case.

%
\end{proof}

\begin{cor}\label{cor:simple.form.nondeg}
Let $V$ be a $\Z$-graded conformal vertex superalgebra. If $V$ is simple, of CFT type, and $L_1 V_1 = 0$, then the bilinear form \eqref{eq:std.form} is nondegenerate.
\end{cor}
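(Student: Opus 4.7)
The plan is to combine the classification of invariant bilinear forms from the preceding proposition with the standard ``radical is an ideal'' argument for vertex (super)algebras.

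First, I would identify the space of invariant forms. By the third bullet of the proposition, the space of invariant bilinear forms on $V$ is isomorphic to $(V_0 / L_1 V_1)^*$. Since $V$ is of CFT type, $V_0 = \C \vac$, and by assumption $L_1 V_1 = 0$, so this space is one-dimensional. Thus the form $\langle \cdot, \cdot \rangle$ defined in \eqref{eq:std.form} is, up to scalar, the \emph{unique} invariant bilinear form on $V$.

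Next, I would verify that \eqref{eq:std.form} is not the zero form by computing $\langle \vac, \vac \rangle$ explicitly: since $L_0 \vac = 0$ and $L_1 \vac = 0$, the integrand reduces to $z^{-1} Y(\vac, z^{-1})\vac = z^{-1} \vac$, giving $\langle \vac, \vac \rangle \vac = -\vac$. Hence the form is nonzero.

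The crux of the argument is then the standard fact that the radical
\[
\rad(\langle \cdot, \cdot \rangle) = \{ v \in V \mid \langle v, V \rangle = 0 \}
\]
of an invariant bilinear form on a $\Z$-graded conformal vertex superalgebra is an ideal of $V$. I would derive this from the invariance identity \eqref{eq.def:invar.form.VA}: if $v \in \rad$ and $a \in V$, then for any $u \in V$ and any mode $a_{(n)}$ one has $\langle u, a_{(n)} v \rangle = \pm \langle a'_{(n')} u, v \rangle = 0$, where $a'_{(n')}$ is the adjoint mode read off from \eqref{eq.def:invar.form.VA} (with a sign governed by the Koszul rule); hence $a_{(n)} v \in \rad$. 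This shows $\rad$ is closed under all modes, i.e., is an ideal.

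Since $V$ is simple, the ideal $\rad(\langle \cdot, \cdot \rangle)$ is either $0$ or $V$. The computation $\langle \vac, \vac \rangle = -1 \neq 0$ rules out the latter, so $\rad(\langle \cdot, \cdot \rangle) = 0$, i.e., the form is nondegenerate. The main (minor) obstacle is writing out the adjoint-mode computation carefully with correct super signs, but this is a direct transcription of Li's argument from \cite{Li94} using the Koszul rule, exactly as in the proof of the proposition.
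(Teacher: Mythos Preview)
Your argument is correct and is exactly the standard proof; the paper states the corollary without proof, leaving precisely this ``radical is an ideal'' argument (Li's, transported via the Koszul rule) as the intended justification. Note that your first paragraph on one-dimensionality of the space of invariant forms is not actually used in the rest of your proof and can be omitted; the nonvanishing computation $\langle \vac,\vac\rangle=-1$ together with simplicity already suffices.
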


A vertex superalgebra $V$ is said to be self-contragredient if it possesses a nondegenerate invariant bilinear form.

\begin{rem}
The rendition of the notion of invariant bilinear form to the case of $\tfrac{1}{2}\Z$-graded vertex superalgebras is more complicated than it is in the $\Z$-graded case, essentially because of the appearance of $(-z^{-2})^{L_0}$. See {\cite[Equation (2.6.1)]{Duncan.supermoonshine}} and {\cite[Section 2.2]{Yamauchi}}.
\end{rem}

\begin{defn}\label{def:holom}
Let $V$ be a $\Z$-graded conformal vertex superalgebra. We shall call $V$ \emph{holomorphic} if it is self-contragredient, $C_2$-cofinite and rational and if, moreover, the unique irreducible ordinary $V$-module is the adjoint module $V$ itself (in particular $V$ is simple).
\end{defn}

\section{Affine vertex superalgebras}

Let $\widehat{\g}$ and $V^k(\g)$ be as in Section \ref{sec:affine.Lie} above. This $\widehat{\g}$-module can be given the structure of a $\Z$-graded vertex superalgebra of CFT type in which $V_1$ is naturally identified with $\g$ and for $x \in V_1$ we have $Y(x, z) = \sum_{n \in \Z} (x t^n) z^{-n-1}$. The simple quotient of $V^k(\g)$ is denoted $V_k(\g)$.

If $(\cdot, \cdot)$ is non degenerate then the Sugawara construction equips $V^k(\g)$ with a conformal structure as follows {\cite[p. 165]{Kac.book}}. Let $\{x^i\}$ and $\{y^i\}$ be a pair of dual bases of $\g$ with respect to $(\cdot, \cdot)$ and let
\[
\omega = \frac{1}{2(k+h^\vee)} \sum_{i}  x^{i}_{(-1)} y^{i}_{(-1)} \vac.
\]
Then $\omega$ is a conformal vector of central charge
\[
c = \frac{k \sdim(\g)}{k + h^\vee}.
\]
Here $h^\vee$ is the dual Coxeter number of the pair $\g, (\cdot, \cdot)$, defined as half the eigenvalue of the Casimir operator
\[
\Omega = \sum_i x^i y^i \in U(\g)
\]
in the adjoint representation. If $\g$ is simple then there is at most one invariant nondegenerate form, and it is customary to fix it so that $(\theta, \theta)=2$ where $\theta$ is the highest root of $\g$. The corresponding dual Coxeter number $h^\vee$ is listed for some cases in Table \ref{table:ddcn} below.

We note that $L_1 V^k(\g)_1 = 0$, so \eqref{eq:std.form} equips this vertex superalgebra, as well as the simple quotient $V_k(\g)$, with symmetric invariant bilinear forms $\left<\cdot, \cdot\right>$. The restriction of  
$\left<\cdot, \cdot\right>$ to $V_1 = \g$ is just $k$ times $(\cdot, \cdot)$.

\section{The theorem of Zhu and its generalizations}

In \cite{Zhu} Zhu proved a remarkable theorem on modularity of characters of vertex algebras. This theorem has been generalized in a number of directions. Most relevant for us are \cite{Dong-Zhao-Z} and \cite{JVE-CMP} in which $\Z$-graded vertex superalgebras are treated.

Before proceding we recall some notational conventions on modular forms. The Eisenstein series
\begin{align*}
{G}_{2k}(q) &= 2\zeta(2k) + \frac{2(2\pi i)^{2k}}{(2k-1)!} \sum_{n=1}^\infty \sigma_{2k-1}(n) q^n, \\
\text{and} \quad E_{2k}(q) &= 1 - \frac{4k}{B_{2k}} \sum_{n=1}^\infty \sigma_{2k-1}(n) q^n,
\end{align*}
are related by $G_{2k}(q) / E_{2k}(q) = 2\zeta(2k) = -(2\pi i)^{2k} B_{2k} / (2k)!$. Here $\zeta(s)$ is the Riemann zeta function, $B_{2k}$ are the Bernoulli numbers (defined by $x/(e^x-1) = \sum_{n=0}^\infty B_n x^n / n!$) and $\sigma_\ell(n) = \sum_{d \mid n} d^\ell$ is a divisor power sum. 
For $2k \geq 4$ the Eisenstein series is a holomorphic modular form on $\Gamma = SL_2(\Z)$ of weight $2k$. The dimension of the space $\mathcal{M}_{2k}^{\text{hol}}(\Gamma)$ of such forms is immediately deducible from the fact that $\mathcal{M}_{*}^{\text{hol}}(\Gamma) = \bigoplus_{k \in \Z_{+}} \mathcal{M}_{2k}^{\text{hol}}(\Gamma) \cong \C[G_4, G_6]$ as graded rings. In particular $\mathcal{M}_{12}^{\text{hol}}(\Gamma)$ is $2$-dimensional, spanned by $E_{12}(q)$ and the cusp form
\begin{align*}
\Delta(q) = q - 24 q^2 + 252 q^3 + \cdots,
\end{align*}
and $\mathcal{M}_{14}^{\text{hol}}(\Gamma)$ is $1$-dimensional, spanned by
\begin{align*}
E_{14}(q) = 1 - 24 q - 196632 q^2 - \cdots.
\end{align*}
Unlike $E_{2k}(q)$ for $k \geq 2$, the Eisenstein series
\begin{align*}
E_2(q) = 1 - 24 q - 72 q^2 - \cdots
\end{align*}
is only quasi-modular.

Let $V$ be a $\Z$-graded conformal vertex superalgebra of central charge $c$. First a modified vertex superalgebra structure $(V, Y[-, z], \vac', \omega')$ is defined as follows. For homogeneous $a \in V$ one has
\[
Y[a, z] = \sum_{n \in \Z} a_{([n])} z^{-n-1} = e^{2\pi i \Delta(a)} Y(a, e^{2\pi i z}-1),
\]
also $\vac' = \vac$ and $\omega' = (2\pi i)^2 (\omega - \tfrac{c}{24}\vac)$. In particular $L_{[0]} = \omega_{([1])}'$ defines a new $\Z_+$-grading on $V$. We write
\[
V_{[k]} = \{v \in V \mid L_{[0]}v = k v\}.
\]

Now let $M$ be an ordinary $V$-module. Let $u \in V$ and $q = e^{2\pi i \tau}$ where $\tau$ lies in the upper half complex plane, equivalently $|q| < 1$. The supertrace function is defined to be
\[
Z_M(u, \tau) = \str_M u_0 q^{L_0-c/24}.
\]
This is a formal power series. If $V$ is $C_2$-cofinite then the series converges to a holomorphic function (for each $u \in V$). The supertrace function at $u = \vac$ recovers the supercharacter
\[
\chi_M(\tau) = Z_M(\vac, \tau) = q^{h-c/24} \sum_{n \in \Z_{\geq 0}} \sdim(M_n) q^n,
\]
where $M = \bigoplus_{n \in \Z_{\geq 0}} M_n$ with $L_0|_{M_n} = n + h$. 
Now Zhu's theorem and its generalizations (see {\cite[Theorem 5.3.2]{Zhu}} for vertex algebras, {\cite[Theorem 4.5]{Dong-Zhao-Z}} and {\cite[Theorem 1.3]{JVE-CMP}} for $\Z$-graded vertex superalgebras) assert modular invariance of the supertrace functions $Z_M$ in an appropriate vector-valued sense. As a very special case we have:
\begin{thm}\label{thm:zhu.main}
Let $V$ be a $\Z$-graded holomorphic vertex superalgebra of central charge $c=24$. Then
\[
Z_{V}\left( u, \frac{a\tau+b}{c\tau+d} \right) = (c\tau+d)^k Z_{V}(u, \tau)
\]
for all $k \in \Z_+$ and $u \in V_{[k]}$, and all $A = \twobytwo abcd \in SL_2(\Z)$.
\end{thm}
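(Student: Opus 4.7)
The plan is to deduce Theorem \ref{thm:zhu.main} directly from the general vector-valued modular invariance theorem for $\Z$-graded vertex superalgebras cited just above the statement, specialized using the holomorphicity hypothesis.

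First I would apply {\cite[Theorem 4.5]{Dong-Zhao-Z}} (or equivalently {\cite[Theorem 1.3]{JVE-CMP}}) to $V$. Since $V$ is rational, $C_2$-cofinite and of CFT type, the theorem produces, for each $k$ and each $A = \twobytwo abcd \in SL_2(\Z)$, a linear relation expressing $(c\tau+d)^{-k} Z_V(u, A\tau)$ as a linear combination of the supertrace functions $Z_{M^i}(u, \tau)$ as $M^i$ ranges over the irreducible $V$-modules, with coefficients depending only on $A$ (not on $u$). Since $V$ is holomorphic the only irreducible $V$-module is $V$ itself, so the relation collapses to
\begin{align*}
(c\tau+d)^{-k} Z_V(u, A\tau) = \chi(A) Z_V(u, \tau)
\end{align*}
for a scalar $\chi(A)$ independent of $u \in V_{[k]}$. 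A routine composition argument shows $\chi : SL_2(\Z) \to \C^*$ is a group homomorphism.

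Second I would compute $\chi$ on the generator $T = \twobytwo 1101$. Expanding directly,
\begin{align*}
Z_V(u, \tau+1) = \str_V u_0 \, e^{2\pi i (L_0 - c/24)} q^{L_0-c/24}.
\end{align*}
Since $V$ is $\Z$-graded, the eigenvalues of $L_0$ on $V$ are integers, and since $c=24$ one has $c/24 = 1 \in \Z$, so $e^{2\pi i (L_0 - c/24)}$ acts as the identity on every $L_0$-eigenspace. Hence $Z_V(u, \tau+1) = Z_V(u, \tau)$ and $\chi(T) = 1$. Finally, the abelianization of $SL_2(\Z)$ is the cyclic group of order $12$ generated by the class of $T$ (as follows from the presentation $\langle S, T \mid S^4 = 1, \; (ST)^3 = S^2\rangle$). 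Since the homomorphism $\chi$ factors through the abelianization and is trivial on $T$, it is trivial on all of $SL_2(\Z)$, yielding the claimed identity.

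The main obstacle, such as it is, is ensuring the underlying vector-valued modular invariance theorem genuinely applies in the $\Z$-graded super setting at the level of scalar (not merely projective) characters; once that is in hand, the reduction to a single character $\chi$ and the computation $\chi(T)=1$ (which crucially uses both integrality of the grading and $c/24 \in \Z$) are immediate.
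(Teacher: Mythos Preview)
Your proposal is correct and follows essentially the same approach as the paper: invoke the vector-valued modularity theorem, use holomorphicity to reduce to a single scalar character of $SL_2(\Z)$, compute that $T$ acts trivially because the grading is integral and $c/24 \in \Z$, and conclude the character is trivial. The only cosmetic difference is that you kill the character via the abelianization $SL_2(\Z)^{\mathrm{ab}} \cong \Z/12 = \langle T\rangle$, while the paper argues directly from the relations $S^2=(ST)^3=1$ to get $\rho(S)=1$; these are equivalent one-line arguments.
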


\begin{proof}
The theorem of Zhu asserts that
\[
Z_{V}\left( u, \frac{a\tau+b}{c\tau+d} \right) = (c\tau+d)^k \rho(A) Z_{V}(u, \tau)
\]
for some character $\rho : SL_2(\Z) \rightarrow \C^\times$. Since $V$ is $\Z$-graded and $c = 24$, the action of $T : \tau \mapsto \tau + 1$ on $Z_V(u, \tau)$ is trivial. The action of $S : \tau \mapsto -1/\tau$ is also trivial since in general $S^2$ and $(ST)^3$ act trivially. 
\end{proof}

\begin{rem}
Modularity of supertrace functions holds, in a vector-valued sense, for rational $C_2$-cofinite vertex superalgebras. We refrain from spelling out the details since, in the supersymmetric case, the full statement requires \emph{queer supertrace functions}, which we do not need in this work. See \cite{JVE-CMP} for details.
\end{rem}

We will use the following intermediate result of Zhu ({\cite[Proposition 4.3.5]{Zhu}} for vertex algebras, {\cite[Proposition 8.4]{JVE-CMP}} for $\Z$-graded vertex superalgebras).
\begin{prop}\label{prop:zhu.prelim}
Let $V$ be a rational and $C_2$-cofinite $\Z$-graded vertex superalgebra, and let $M$ be a positive energy $V$-module. For every $u, v \in V$ we have
\begin{align*}
\str_M u_0 v_0 q^{L_0 - c/24} = Z_{M}(u_{([-1])}v, \tau) - \sum_{k=1}^\infty {G}_{2k}(q) Z_M(u_{([2k-1])}v, \tau).
\end{align*}
\end{prop}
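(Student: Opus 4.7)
The plan is to follow the recursion/residue argument of Zhu \cite[Proposition~4.3.5]{Zhu}, transposed to the $\Z$-graded vertex superalgebra setting as in \cite[Section 8]{JVE-CMP}. The two essential inputs are cyclicity of the supertrace and the commutator formula of Definition~\ref{def.vsa}.

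First I would consider the formal generating series
\[
F(z, \tau) := \str_M Y^M(u, z)\, v_0\, q^{L_0-c/24}
\]
from which the desired supertrace $\str_M u_0 v_0 q^{L_0-c/24}$ appears as the coefficient of $z^{-\Delta(u)}$, where $\Delta(u)$ denotes the conformal weight of $u$. Using the homogeneity relation $q^{L_0} Y^M(u,z) = q^{\Delta(u)} Y^M(u, qz) q^{L_0}$ (a direct consequence of $[L_0, u_{(n)}] = (\Delta(u)-n-1)u_{(n)}$) together with cyclicity of the supertrace—under which moving $Y^M(u,z)$ around the trace twice produces two compensating factors of $(-1)^{p(u)p(v)}$, while $v_0$ commutes with $L_0$—one shows that $z^{\Delta(u)} F(z,\tau)$ is invariant under $z \mapsto qz$. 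Independently, the commutator formula applied to $Y^M(u,z)v_0 - (-1)^{p(u)p(v)} v_0 Y^M(u,z)$ expresses this combination as $\sum_{j \geq 0} c_j(z)\, Z_M(u_{(j)}v, \tau)$ with explicit rational functions $c_j(z)$ of $z$.

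Combining these two inputs identifies $F(z, \tau)$ with a linear combination of the Weierstrass-type doubly-quasi-periodic kernels $P_n(z, \tau)$ familiar from Zhu's work, whose Laurent expansions at $z=0$ involve the Eisenstein series $G_{2k}(q)$. Extracting the coefficient of $z^{-\Delta(u)}$ from both sides, and converting the ordinary modes $u_{(j)}v$ into the bracket modes $u_{([j])}v$ via Zhu's change of variable $w = e^{2\pi i z}-1$—which is precisely engineered so that $e^{2\pi i z \Delta(u)} Y(u, e^{2\pi i z}-1) = Y[u,z]$, absorbing the transcendental factors and reorganising ordinary modes into bracket modes—delivers the stated formula. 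The principal obstacle is the bookkeeping: tracking the parity signs consistently via the Koszul rule and identifying the Laurent expansion of the relevant kernel $P_n(z, \tau)$ in terms of $G_{2k}(q)$ with the correct normalisations. The $C_2$-cofiniteness hypothesis ensures that only finitely many $k$ contribute non-trivially for each fixed pair $(u,v)$, so the sum on the right-hand side is effectively finite and all manipulations converge.
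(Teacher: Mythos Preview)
The paper does not actually supply a proof of this proposition: it is stated with citations to \cite[Proposition~4.3.5]{Zhu} and \cite[Proposition~8.4]{JVE-CMP}, and then used as a black box. Your sketch is precisely the argument contained in those references---the quasi-periodicity of the one-point function under $z \mapsto qz$, combined with the commutator formula and the Laurent expansion of the Weierstrass kernels in terms of Eisenstein series---so there is nothing to compare beyond noting that you have correctly reconstructed the cited proof. One small imprecision: the sign bookkeeping in your quasi-periodicity step is not really about ``moving $Y^M(u,z)$ around the trace twice''; rather one moves the even operator $q^{L_0}$ cyclically (which introduces no sign at all), and the parity signs only enter when applying the commutator formula to $Y^M(u,z)$ and $v_0$. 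This does not affect the validity of your outline.
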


One readily checks that if $u \in V_1$ and $L_1u = 0$ then $L_{[0]}u = L_0 u = u$. Note also that $u{([1])}v = (2\pi i)^2 u_{(1)}v = (2\pi i)^2 \left<u, v\right> \vac$, and $G_2(q) = -(2\pi i)^2 E_2(q) / 12$.
\begin{cor}\label{cor:Zhu.it}
Suppose $V$ to be holomorphic. For all $u, v \in V_1$ we have
\begin{align}\label{eq:Zhu.it}
\str_{V} u_0 v_0 q^{L_0-c/24} = Z_{V}(u_{([-1])}v, \tau) + \frac{\left<u, v\right>}{12} {E}_{2}(q) \chi_V(\tau).
\end{align}
\end{cor}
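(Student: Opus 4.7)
The plan is to apply Proposition \ref{prop:zhu.prelim} with $M = V$, which is legitimate because holomorphicity of $V$ includes rationality and $C_2$-cofiniteness and makes the adjoint module itself the unique irreducible positive-energy $V$-module. The proposition then reads
\[
\str_V u_0 v_0 q^{L_0 - c/24} = Z_V(u_{([-1])} v, \tau) - \sum_{k=1}^\infty G_{2k}(q) Z_V(u_{([2k-1])} v, \tau),
\]
and the task reduces to showing that only the $k=1$ term survives on the right and that this term reproduces the Eisenstein correction in \eqref{eq:Zhu.it}.

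For the vanishing of the terms with $k \geq 2$, I would use a weight argument in the Zhu-shifted grading. Holomorphicity forces $V_0 = \C\vac$ and hence $L_1 V_1 = 0$, so as recalled immediately before the corollary, $L_{[0]} u = L_0 u = u$ for every $u \in V_1$. Consequently, for $u, v \in V_1$, the element $u_{([2k-1])} v$ lies in $V_{[2-2k]}$; for $k \geq 2$ the weight $2 - 2k$ is strictly negative, and since $V$ carries only non-negative $L_{[0]}$-weights the element must vanish, killing every term of the sum beyond $k=1$.

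It remains to process the $k=1$ contribution. Using $u_{([1])} v = (2\pi i)^2 u_{(1)} v = (2\pi i)^2 \langle u, v\rangle \vac$ together with $G_2(q) = -(2\pi i)^2 E_2(q)/12$, the quantity $-G_2(q) Z_V(u_{([1])} v, \tau)$ collapses, after tracking the powers of $2\pi i$ against the convention for $Z_V$, to $\langle u, v\rangle E_2(q) \chi_V(\tau) / 12$, which is precisely the displayed Eisenstein term in \eqref{eq:Zhu.it}. The only substantive point is the weight-based vanishing of the higher Eisenstein terms; once that is secured, the rest is a short substitution into Proposition \ref{prop:zhu.prelim}.
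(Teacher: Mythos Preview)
Your argument is correct and is essentially the proof the paper intends: it simply unpacks the remarks placed immediately before the corollary (that $L_{[0]}u = u$ for $u \in V_1$, that $u_{([1])}v = (2\pi i)^2\langle u,v\rangle\vac$, and that $G_2 = -(2\pi i)^2 E_2/12$) and feeds them into Proposition~\ref{prop:zhu.prelim}, using the $\Z_+$-grading by $L_{[0]}$ to kill the $k\geq 2$ terms. One small caveat: the assertion ``holomorphicity forces $V_0=\C\vac$'' is not a formal consequence of Definition~\ref{def:holom} as stated; the paper treats CFT type (and hence $L_1V_1=0$ via Li's theorem and self-contragredience) as a standing assumption rather than deriving it, so you are matching the paper's implicit hypotheses rather than proving something extra.
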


\section{Holomorphic vertex superalgebras of central charge $24$}

The following proposition is similar to {\cite[Corollary 2.3]{DM04}} in the purely even case.
\begin{prop}\label{prop:V1.Killing.nondeg}
Let $V$ be a holomorphic $\Z$-graded vertex superalgebra of central charge $c=24$ and let $\g$ denote the Lie superalgebra $V_1$. Then either the Killing form of $\g$ is nondegenerate, or else $\sdim(\g) = 24$ and the Killing form of $\g$ vanishes.
\end{prop}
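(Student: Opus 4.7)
The plan is to apply Corollary \ref{cor:Zhu.it} to $u, v \in V_1 = \g$ and extract information from the resulting identity
\[
\str_V u_0 v_0\, q^{L_0 - 1} = Z_V(u_{([-1])}v, \tau) + \frac{\left<u, v\right>}{12} E_2(q) \chi_V(\tau)
\]
by matching the $q^{-1}$ and $q^0$ Fourier coefficients on both sides.

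Since $V$ is holomorphic it is in particular self-contragredient, so the description of the space of invariant bilinear forms as $(V_0/L_1 V_1)^*$ forces $L_1 V_1 = 0$, and hence $u_{([-1])}v \in V_{[2]}$. By Theorem \ref{thm:zhu.main} the function $Z_V(u_{([-1])}v, \tau)$ is then a weight-$2$ weakly holomorphic modular form on $SL_2(\Z)$, with at most a simple pole at the cusp. Multiplying by $\Delta(q)$ kills this pole and produces a holomorphic modular form of weight $14$, so since $\mathcal{M}_{14}^{\text{hol}}(\Gamma) = \C E_{14}$ I obtain
\[
Z_V(u_{([-1])}v, \tau) = \lambda \, \frac{E_{14}(q)}{\Delta(q)}
\]
for some scalar $\lambda = \lambda(u,v)$.

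The essential calculation is the short expansion $E_{14}/\Delta = q^{-1} + 0\cdot q^0 + O(q)$; it is the vanishing of the constant term here that makes the argument go through. Matching the $q^{-1}$ coefficients of the Zhu identity (LHS is $0$ since $u_0 \vac = 0$; RHS is $\lambda + \left<u,v\right>/12$) gives $\lambda = -\left<u,v\right>/12$. Matching constant terms: on the LHS only $V_1$ contributes, giving $\str_\g (\ad u)(\ad v) = \kappa(u,v)$; on the RHS the first summand contributes $0$ by the preceding calculation and the second contributes $\frac{\left<u,v\right>}{12}(\sdim(\g)-24)$. Therefore
\[
\kappa(u, v) = \frac{\sdim(\g) - 24}{12} \left<u, v\right>
\]
as bilinear forms on $\g$.

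To conclude, Corollary \ref{cor:simple.form.nondeg} ensures $\left<\cdot,\cdot\right>$ is nondegenerate on $V$, and since distinct $L_0$-eigenspaces of $V$ are $\left<\cdot,\cdot\right>$-orthogonal (an immediate consequence of the invariance axiom applied to $\omega$), its restriction to $V_1 = \g$ is nondegenerate as well. If $\sdim(\g) \neq 24$ the displayed identity expresses $\kappa$ as a nonzero scalar multiple of a nondegenerate form, hence nondegenerate; if $\sdim(\g) = 24$ the identity forces $\kappa \equiv 0$. The main delicate point is not the supersymmetry --- Corollary \ref{cor:Zhu.it} already packages the correct Zhu-type identity --- but rather the modular-forms identification of $Z_V(u_{([-1])}v,\tau)$ with $\lambda \, E_{14}/\Delta$, whose vanishing constant term is exactly what drives the conclusion.
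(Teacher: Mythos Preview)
Your proof is correct and follows essentially the same route as the paper: identify $\Delta(q)\,Z_V(u_{([-1])}v,\tau)$ as a multiple of $E_{14}$ via Zhu's theorem, then compare the $q^{-1}$ and $q^{0}$ coefficients of the identity of Corollary~\ref{cor:Zhu.it} to obtain $\kappa(u,v)=\tfrac{\sdim(\g)-24}{12}\left<u,v\right>$, and conclude using nondegeneracy of $\left<\cdot,\cdot\right>$. The only cosmetic difference is that the paper multiplies the entire Zhu identity by $\Delta(q)$ before matching coefficients, whereas you first write $Z_V(u_{([-1])}v,\tau)=\lambda\,E_{14}/\Delta$ and then match; your added justifications that $L_1V_1=0$ and that $\left<\cdot,\cdot\right>$ restricts nondegenerately to $V_1$ are welcome clarifications the paper leaves implicit.
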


\begin{proof}
Let us fix $u, v \in V_1$ and write
\[
\sum_{n=0}^\infty a_n q^n = \Delta(q) \str_V (u_{([-1])}v)_0 q^{L_0-c/24}.
\]
Since $u_{([-1])}v$ has conformal weight $2$ (relative to Zhu's modified vertex algebra structure) we see that $\sum a_n q^n$ is a holomorphic modular form of weight $14$. It is therefore a multiple of $E_{14}(q) = 1 - 24q + \cdots$, so $a_1 = -24a_0$.

Note that $u_0|_{V_0} = 0$ and
\[
\str_{V_1} u_0 v_0 = \kappa(u, v).
\]
Multiplying equation \eqref{eq:Zhu.it} through by $\Delta(q)$ and equating coefficients yields
\begin{align*}
(a_0 + a_1 q + \cdots) = \left(0 + \kappa(u, v) q + \cdots \right) - \frac{\left<u, v\right>}{12} \left(1 - 48 q + \cdots\right) \left(1 + \sdim(V_1) + \cdots\right).
\end{align*}
We immediately deduce
\begin{align}\label{eq:fundamental.relation}
\kappa(u, v) = \frac{\sdim(V_1)-24}{12} \left<u, v\right>.
\end{align}
Since $V$ is self-contragredient, Corollary \ref{cor:simple.form.nondeg} assures us that $\left<\cdot, \cdot\right>$ is nondegenerate. The proposition follows immediately.
\end{proof}

We recall a basis theorem for $C_2$-cofinite vertex algebras, proved in \cite{GN} (see also \cite[Theorem 3.1]{ABD}), and which also holds for vertex superalgebras with the same proof (see \cite{Dong.Han.finite}).
\begin{thm}\label{thm:PBW.basis}
Let $V$ be a $C_2$-cofinite vertex superalgebra of CFT type. Let $\{a^1, a^2, \ldots, a^r\}$ be a set of elements of $V$, homogeneous (with respect to the $\Z_+$-grading and the $\Z/2$-grading) of positive conformal weight which, together with $\vac$, span $R_V = V / V_{(-2)}V$. Then $V$ is spanned by the monomials
\begin{align*}
a^{i_1}_{(-n_1-1)} a^{i_2}_{(-n_2-1)} \cdots a^{i_k}_{(-n_k-1)} \vac
\end{align*}
where $n_1 > n_2 > \ldots > n_k \geq 0$.
\end{thm}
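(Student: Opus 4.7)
The plan is to follow the Gaberdiel--Neitzke strategy, adapted to vertex superalgebras by carrying signs through via the Koszul rule. The overarching structure is a double induction: an outer induction on the conformal weight, and an inner induction (used for the bubble-sort reordering argument) on a suitable invariant of monomials.

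First, I would set up notation: let $U \subset V$ be the linear span of the ordered monomials $a^{i_1}_{(-n_1-1)} \cdots a^{i_k}_{(-n_k-1)} \vac$ with $n_1 > n_2 > \cdots > n_k \geq 0$ (with $\vac$ included as the empty product). The goal is $U = V$. Since $V$ is $C_2$-cofinite and of CFT type, $V = \bigoplus_n V_n$ with $V_0 = \C \vac$, so it suffices to show $V_n \subset U$ for each $n \geq 0$ by induction on $n$; the base case $n = 0$ is trivial.

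For the inductive step, fix $v \in V_n$ with $n \geq 1$. By the assumption that $\{\vac, a^1, \ldots, a^r\}$ spans $R_V$, there exist scalars $\lambda_i$ (only nonzero when $a^i$ has weight $n$) such that
\[
v - \sum_i \lambda_i a^i \in V_{(-2)}V \cap V_n.
\]
The first piece is already in $U$, so it remains to rewrite a general element $b_{(-2)}c$ with $\text{wt}(b) + \text{wt}(c) + 1 = n$ in terms of ordered monomials. By the outer induction, both $b$ and $c$, having weight strictly less than $n$, can be expanded as linear combinations of ordered monomials. Using the associativity formula
\[
(a_{(p)} b)_{(q)} c = \sum_{j \geq 0} (-1)^j \binom{p}{j} \Bigl( a_{(p-j)} b_{(q+j)} c - p(a,b)(-1)^p b_{(p+q-j)} a_{(j)} c \Bigr),
\]
one iteratively rewrites $(a^{i_1}_{(-m_1-1)} \cdots a^{i_l}_{(-m_l-1)} \vac)_{(-2)} c$ as a linear combination of expressions of the form $a^{j_1}_{(-s_1-1)} \cdots a^{j_p}_{(-s_p-1)} \vac$ in which every mode is negative, though the $s$'s need not be in decreasing order and repetitions may occur.

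Finally, to impose strictly decreasing indices, I would use the (super) commutator
\[
a^i_{(-m-1)} a^j_{(-n-1)} - p(a^i, a^j) a^j_{(-n-1)} a^i_{(-m-1)} = \sum_{k \geq 0} \binom{-m-1}{k} (a^i_{(k)} a^j)_{(-m-n-k-2)},
\]
and bubble-sort adjacent factors whenever $s_\alpha \leq s_{\alpha+1}$ (with equal indices handled by the $\Z/2$-graded symmetry of $a^i_{(-s-1)} a^i_{(-s-1)}$, which either vanishes or equals half of a commutator term). Each commutator introduces error terms whose outermost factor is $(a^i_{(k)}a^j)_{(\cdot)}$ with $k \geq 0$, and $a^i_{(k)}a^j$ lies in $V$ of strictly smaller weight; by the outer induction this element expands into ordered monomials applied to $\vac$, which can be folded into the calculation.

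The main obstacle is showing that this reordering terminates, as commutator errors can produce new, long monomials. The resolution is to note two things: in each commutator step the total mode-sum $s_1 + \cdots + s_p$ is preserved, and the number of adjacent inversions decreases; meanwhile, the error terms, while potentially longer, have outermost mode coming from an element $a^i_{(k)}a^j$ of strictly lower conformal weight than the original piece, and so are controlled by the outer induction. Packaging these two observations as a well-founded lexicographic induction on (weight, inversion count) closes the argument. A detailed writeup for the purely even case appears in \cite{GN} and \cite[Theorem 3.1]{ABD}; the extension to vertex superalgebras (handled in \cite{Dong.Han.finite}) is verbatim once one tracks the parity signs in the commutator and associativity formulas.
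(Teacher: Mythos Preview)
The paper does not supply its own proof of this theorem; it simply quotes the result from \cite{GN} and \cite[Theorem~3.1]{ABD}, remarking that the same proof works for vertex superalgebras as in \cite{Dong.Han.finite}. Your proposal is precisely a sketch of the Gaberdiel--Neitzke/ABD argument with Koszul signs inserted, so in that sense it matches the paper's ``proof'' (i.e., the references it cites) exactly.

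One comment on your termination argument: the lexicographic order on (weight, inversion count) is not quite the right invariant, since the total conformal weight of the monomial is fixed at $n$ throughout the inner reordering. The standard way to close the loop is to induct on the \emph{length} of the monomial (number of mode operators), or equivalently on the total generator weight $\sum_l \Delta(a^{i_l})$ (Li's standard filtration): the commutator error $(a^i_{(k)}a^j)_{(-m-n-k-2)}(\cdots)$ replaces two factors by one, so lies in strictly lower filtration degree, and after expanding $a^i_{(k)}a^j$ via the outer induction and re-expanding the mode via associativity, the resulting terms remain in strictly lower total generator weight. With that correction your sketch is the standard proof.
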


\begin{prop}\label{prop:C2.implies.integ}
Let $V$ be a holomorphic $\Z$-graded vertex superalgebra of central charge $c=24$ for which $\sdim(V_1) \neq 24$. Then the defect of $V_1$ is $0$.
\end{prop}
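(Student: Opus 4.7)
The plan is a proof by contradiction. Since $\sdim(V_1)\neq 24$, Proposition \ref{prop:V1.Killing.nondeg} ensures that the Killing form of $\g:=V_1$ is nondegenerate, and Proposition \ref{prop.nondeg=good} decomposes $\g=\bigoplus_i \g^{(i)}$ into simple Lie superalgebras, each with nondegenerate Killing form. As recalled just before Lemma \ref{lem:defect.root.norm}, the simple Lie superalgebras of defect $0$ with nondegenerate Killing form are precisely the purely even ones together with $B(0,n)$; I will thus aim to show that every $\g^{(i)}$ is of one of these two types. Fix a summand $\g:=\g^{(i)}$ and suppose for contradiction that $\g$ is neither purely even nor of the form $B(0,n)$.

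By Lemma \ref{lem:defect.root.norm}, $\g$ then possesses two even roots $\alpha$ and $\beta$ of opposite norm, $(\alpha,\alpha)>0>(\beta,\beta)$, with respect to the normalized invariant form $(\cdot,\cdot)$ on $\g$. Define $k_i\in\C$ by $\langle\cdot,\cdot\rangle|_{\g}=k_i(\cdot,\cdot)$; nondegeneracy of $\langle\cdot,\cdot\rangle$ (Corollary \ref{cor:simple.form.nondeg}) yields $k_i\neq 0$. For each $\gamma\in\{\alpha,\beta\}$, choose an $\slfrak_2$-triple $(e_\gamma,f_\gamma,h_\gamma)\subset \g_{\even}$; this generates a purely even subalgebra $\slfrak_2^\gamma\cong\slfrak_2$ of $V_1$, and the associated affine subalgebra $\widehat{\slfrak_2}^\gamma$ acts on $V$ at level $\ell_\gamma:=k_i(\gamma,\gamma)/2$, computed relative to the standard $\slfrak_2$-normalization in which the simple root has squared length $2$.

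The core step is to prove that $\ell_\gamma\in\Z_{\geq 0}$. Since each conformal weight space $V_n$ is finite-dimensional (part of the conformal structure axioms) and the zeroth modes of elements of $V_1$ preserve $V_n$, each $V_n$ is a finite-dimensional $\slfrak_2^\gamma$-module on which $(e_\gamma)_{(0)}$ and $(f_\gamma)_{(0)}$ act nilpotently. Hence these modes are locally nilpotent on all of $V$, and the $\widehat{\slfrak_2}^\gamma$-submodule of $V$ generated by $\vac$ is a positive-energy integrable highest-weight module at level $\ell_\gamma$ with top $\slfrak_2^\gamma$-weight $0$. Standard affine Kac--Moody theory then forces $\ell_\gamma\in\Z_{\geq 0}$. (Alternatively, one could combine $C_2$-cofiniteness via Theorem \ref{thm:PBW.basis} with Lemma \ref{lem:sing.vec} to produce a singular vector $f_{\gamma,(-1)}^m\vac$ that vanishes in the image of the affine subalgebra, thereby pinning $\ell_\gamma = m-1$.)

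Applying this with $\gamma=\alpha$ and $\gamma=\beta$, both $k_i(\alpha,\alpha)/2$ and $k_i(\beta,\beta)/2$ are non-negative integers, while $k_i\neq 0$ and $(\alpha,\alpha)$, $(\beta,\beta)$ have opposite signs; one of these two levels is then strictly negative --- the desired contradiction. Hence every simple summand $\g^{(i)}$ is purely even or of type $B(0,n)$, and $V_1$ has defect $0$. The step I expect to require most care is the normalization check that identifies the level of $\widehat{\slfrak_2}^\gamma$ on $V$ with $k_i(\gamma,\gamma)/2$ and not some other scalar multiple; the remaining ingredients (local nilpotency from finite-dimensionality of $V_n$, and integer levels for integrable vacuum modules of $\widehat{\slfrak_2}$) are standard.
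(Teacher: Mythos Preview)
Your overall strategy matches the paper's: reduce to simple components with nondegenerate Killing form, pick two even roots of opposite norm in a putative component of positive defect, and derive a contradiction from the sign of the induced affine level. The gap is in the step you present as your main argument. Local nilpotency of the zero modes $(e_\gamma)_{(0)}$ and $(f_\gamma)_{(0)}$ on $V$ does \emph{not} by itself force the $\widehat{\slfrak_2}^\gamma$-submodule generated by $\vac$ to be integrable. Integrability of a highest-weight $\widehat{\slfrak_2}$-module requires local nilpotency of \emph{both} affine Chevalley lowering operators, namely $(f_\gamma)_{(0)}$ and $(e_\gamma)_{(-1)}$; the former is automatic on any quotient of the vacuum module $V^{\ell}(\slfrak_2)$ (the $h$-weights in conformal degree $n$ are bounded by $2n$), so establishing it yields nothing new. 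Indeed, for generic $\ell\notin\Z_{\geq 0}$ the vacuum module $V^{\ell}(\slfrak_2)$ itself is simple, has finite-dimensional conformal-weight spaces, and has locally nilpotent zero modes of $e$ and $f$, yet $(e_\gamma)_{(-1)}^N\vac\neq 0$ for all $N$ and the level is not a non-negative integer. What is actually needed is the vanishing $(e_\gamma)_{(-1)}^{m+1}\vac=0$ for some $m$, and that is precisely what your parenthetical ``alternative'' supplies via $C_2$-cofiniteness and Theorem~\ref{thm:PBW.basis}.

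The paper's proof takes exactly that route: the spanning set of Theorem~\ref{thm:PBW.basis} gives a bound $\dim V_\Delta^{(t)}=0$ unless $\Delta>Ct^2$, which forces $e_{(-1)}^{m+1}\vac=0$ for $m$ large; then Lemma~\ref{lem:sing.vec} yields $k\cdot(\gamma,\gamma)>0$, which cannot hold simultaneously for roots of opposite norm. So your ``alternative'' is not optional but is the essential input, and once you promote it to the main line your argument coincides with the paper's.
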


\begin{proof}
By Proposition \ref{prop:V1.Killing.nondeg} the Killing form of $\g = V_1$ is nondegenerate, and by Proposition \ref{prop.nondeg=good}, $\g$ is a direct sum of simple components with the nondegenerate Killing form. If the defect of $\g$ were positive, the same would be true of one of its simple components and, by Lemma \ref{lem:defect.root.norm} there would exist even roots $\alpha_1$ and $\alpha_2$ of $\g$ for which $\kappa(\alpha_1, \alpha_1) > 0$ and $\kappa(\alpha_2, \alpha_2) < 0$.   

We now proceed as in the proof of {\cite[Theorem 3.1]{DM06}}. Let $\alpha$ be an even root of $\g$ and $H^{\alpha}$ the corresponding coroot, let $\g_\alpha = \C e$, and write $V_\Delta^{(t)} = \{v \in V_\Delta \mid H^\alpha_{(0)} v = t v \}$. Without loss of generality we may take the elements $a^i$ of Theorem \ref{thm:PBW.basis} to be homogeneous relative to $\alpha$. By Theorem \ref{thm:PBW.basis} there exists a positive constant $C$ such that 
\[
\dim(V_{\Delta}^{(t)}) = 0, \qquad \text{unless $\Delta > C t^2$}.
\]
It follows that $e_{(-1)}^{m+1} \vac = 0$ in $V$ for some $m \in \Z_{\geq 0}$. 

The vertex subalgebra of $V$ generated by $\g = V_1$ is a quotient of $V^k(\g)$ at some level $k$ (and choice of normalization of the form $(\cdot, \cdot)$), and $e_{-1}^{m+1} \vac$ must be a singular vector in $V^k(\g)$. It follows from Lemma \ref{lem:sing.vec} that $k \cdot (\alpha, \alpha) > 0$. Since this cannot hold for both $\alpha_1$ and $\alpha_2$ we have a contradiction, and so the defect of $\g$ is $0$.
\end{proof}

\begin{thm}\label{thm:main.24}
Let $V$ be a holomorphic $\Z$-graded vertex superalgebra of central charge $24$ for which $\sdim(V_1) \neq 24$. Then $V_1$ is either zero, or is isomorphic to a finite direct sum
\[
\g^{(1)} \oplus \cdots \oplus \g^{(r)}
\]
of simple Lie superalgebras, each even or of type $B(0, n)$. The restriction of the form $\left<\cdot, \cdot\right>$ of \eqref{eq.def:invar.form.VA} to $\g^{(i)}$ is some positive integer $k_i$ times its normalized invariant form $(\cdot, \cdot)$. Furthermore
\begin{align}\label{eq:key.rel.var}
\frac{h_i^\vee}{k_i} = \frac{\sdim(V_1)-24}{24}, \qquad i = 1, 2, \ldots, r.
\end{align}
The number of such Lie superalgebras is exactly 1332.
\end{thm}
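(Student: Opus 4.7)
The plan splits the theorem into three parts: classify the admissible simple summands of $V_1$, establish the level relation \eqref{eq:key.rel.var} with each $k_i$ a positive integer, and enumerate the resulting Lie superalgebra structures. For the structural classification, since $\sdim(V_1) \neq 24$, Proposition \ref{prop:V1.Killing.nondeg} implies that the Killing form of $\g := V_1$ is nondegenerate, and Proposition \ref{prop.nondeg=good} decomposes $\g = \bigoplus_{i=1}^r \g^{(i)}$ as a direct sum of simple Lie superalgebras with nondegenerate Killing form (the case $r=0$ corresponding to $V_1 = 0$). Proposition \ref{prop:C2.implies.integ} gives that the defect of $\g$ vanishes, hence each $\g^{(i)}$ has defect zero; by Lemma \ref{lem:defect.root.norm} and the classification recalled just before it, each $\g^{(i)}$ is therefore purely even simple or of type $B(0,n)$.

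For the level relation, fix $i$ and let $\theta$ be the highest even root of $\g^{(i)}$, normalized so that $(\theta,\theta)_i = 2$ in the normalized invariant form $(\cdot,\cdot)_i$. Since $\left<\cdot,\cdot\right>$ is nondegenerate (Corollary \ref{cor:simple.form.nondeg}) and $\g^{(i)}$ is simple, uniqueness of invariant forms yields $\left<\cdot,\cdot\right>|_{\g^{(i)}} = k_i (\cdot,\cdot)_i$ for some $k_i \in \C^\times$, and the vertex subalgebra of $V$ generated by $\g^{(i)}$ is a quotient of $V^{k_i}(\g^{(i)})$. Let $f$ span $\g^{(i)}_{-\theta}$. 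Exactly as in the proof of Proposition \ref{prop:C2.implies.integ}, the PBW-type basis of Theorem \ref{thm:PBW.basis} combined with the $H^{\theta}$-weight bound forces $f_{(-1)}^{N+1}\vac = 0$ in $V$ for some smallest $N \geq 1$ (positivity of $N$ holds because $f_{(-1)}\vac = f$ is a nonzero root vector), so that $f_{(-1)}^{N+1}\vac$ is singular in $V^{k_i}(\g^{(i)})$, and Lemma \ref{lem:sing.vec} yields $k_i = N \in \Z_{\geq 1}$. Substituting $\left<\cdot,\cdot\right>|_{\g^{(i)}} = k_i(\cdot,\cdot)_i$ and the standard identity $\kappa|_{\g^{(i)}} = 2 h_i^\vee (\cdot,\cdot)_i$ into equation \eqref{eq:fundamental.relation} immediately delivers \eqref{eq:key.rel.var}.

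The hard part is the enumeration. Writing $\lambda := (\sdim(V_1)-24)/24$, one must list all finite multisets of pairs $(\g^{(i)}, k_i)$ with each $\g^{(i)}$ among the admissible simple types, each $k_i \in \Z_{\geq 1}$, a common ratio $h_i^\vee / k_i = \lambda$, and total superdimension $\sum_i \sdim \g^{(i)} = 24\lambda + 24$. Since $\sdim \g^{(i)} > 0$ for every admissible simple type, the number of summands and the possible values of $\lambda$ are both bounded, reducing the problem to a finite combinatorial search. I would tabulate $(h^\vee, \sdim)$ for each admissible simple type (the classical and exceptional purely even Lie algebras with their standard values, plus $B(0,n) = \ospfrak(1|2n)$ with $h^\vee$ and $\sdim$ computed in the normalization $(\theta,\theta)_i = 2$), then for each candidate $\lambda$ enumerate the finite list of pairs $(\g^{(i)}, k_i)$ realizing that ratio, and finally all compatible multisets. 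This is a direct generalization of Schellekens' original enumeration of the 221 purely even candidates, most efficiently carried out by computer; the output in the $\Z$-graded super setting is the asserted count of $1332$. I expect the principal delicacies to lie in ensuring completeness across the $B(0,n)$ families (cut off by the superdimension bound at finite $n$) and in correctly handling the sign conventions for $h^\vee$ on $B(0,n)$ so that both signs of $\lambda$ are exhausted.
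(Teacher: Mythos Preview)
Your proposal is correct and follows essentially the same route as the paper: invoke Propositions \ref{prop:V1.Killing.nondeg}, \ref{prop.nondeg=good}, and \ref{prop:C2.implies.integ} to reduce to defect-zero simple summands, use the PBW-type bound and Lemma \ref{lem:sing.vec} to show each $k_i \in \Z_{\geq 1}$, combine $\kappa = 2h_i^\vee(\cdot,\cdot)_i$ with \eqref{eq:fundamental.relation} to obtain \eqref{eq:key.rel.var}, and finish with a finite computer search. The only difference is cosmetic: the paper records the explicit a priori bound $\sdim/(h^\vee)^2 \geq 4/9$ (attained at $B(0,1)$) to get $h^\vee < 55$ and $\sdim(V_1) < 1344$ before searching, and your worry about negative $\lambda$ is moot since every admissible simple type has $h^\vee > 0$.
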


\begin{proof}
By Proposition \ref{prop:V1.Killing.nondeg} the Killing form of $V_1$ is nondegenerate, hence by Proposition \ref{prop.nondeg=good} it decomposes as a direct sum of simple Lie superalgebras. By Proposition \ref{prop:C2.implies.integ}, each of these simple components has defect $0$. Therefore each simple component is either purely even or else isomorphic to $B(0, n)$ for some $n \geq 1$.

Let $V_1 = \g^{(1)} \oplus \cdots \oplus \g^{(r)}$ be the decomposition into simple components, and let $(\cdot, \cdot)_i$ be the nondegenerate invariant bilinear form on $\g^{(i)}$ normalised so that long roots have norm $2$. Then by Lemma \ref{lem:sing.vec} the restriction of $\left<\cdot, \cdot\right>$ to $\g^{(i)}$ is some multiple $k_i$ of $(\cdot, \cdot)_i$. That $k_i$ is a positive integer follows from the proof of Proposition \ref{prop:C2.implies.integ}.

Now it well known that $\kappa(\cdot, \cdot) = 2h^\vee_i (\cdot, \cdot)_i$, where $h^\vee$ is the dual Coxeter number given in Table \ref{table:ddcn} below. So \eqref{eq:fundamental.relation} turns into \eqref{eq:key.rel.var}. By inspection of Table \ref{table:ddcn} below, we see that $\sdim / (h^\vee)^2 \geq 4/9$ (with equality realized at $B(0, 1)$). For each simple component $\g$ of $V_1$ we have
\begin{align*}
h^\vee \geq \frac{h^\vee}{k} = \frac{\sdim(V_1)}{24} - 1 \geq \sdim(\g)/24 - 1 \geq (h^\vee)^2 / 54 - 1. 
\end{align*}
This implies $h^\vee < 55$ and in turn $\sdim(V_1) < 1344$.

Using a computer search, written in \texttt{Python}, all direct sums of simple Lie superalgebras from Table \ref{table:ddcn} with total superdimension at most $1344$, satisfying \eqref{eq:key.rel.var} are determined. Such algebras are finite in number since all allowable simple components have positive superdimension. We find 1332 solutions.
\end{proof}

\begin{rem}
The simple affine vertex superalgebra associated with $B(0 \mid n)$ at integer level is known to be $C_2$-cofinite and rational \cite{KG2011} \cite{CL}, and so the same is true of the affine vertex superalgebras $\left<V_1\right>$ appearing in Theorem \ref{thm:main.24}.
\end{rem}

\begin{rem}
The inclusion of the affine subalgebra $\left<V_1\right>$ into $V$ presents the latter as a finite extension of the former. Techniques of \cite{HKL}, and \cite{McRae} can then be applied. Already in the purely even case, however, the decomposition of $V$ into modules over $\left<V_1\right>$ can be very complicated. One solution, adopted in \cite{EMS20}, \cite{Lam.Shimakura.framed}, etc. (see \cite{vE.survey} for a survey), is to build $V$ as an extension of $W^G$ for some holomorphic $W$, and $G$ a cyclic group. The extension theory of $W^G$ is much easier to handle than that of $\left<V_1\right>$, but finding appropriate $W$ and $G$ is difficult.
\end{rem}

\begin{table}[h]
\caption{Dimensions and dual Coxeter numbers} \label{table:ddcn} \centering%
\begin{tabular}{|c|c|c|c|}
\hline
$\g$ & $\dim(\g)$ & $h^\vee$ \\
\hline
$A_n$ & $n(n+2)$ & $n+1$ \\
$B_n$ & $n(2n+1)$ & $2n-1$ \\
$C_n$ & $n(2n+1)$ & $n+1$ \\
$D_n$ & $n(2n-1)$ & $2n-2$ \\
$E_6$ & $78$ & $12$ \\
$E_7$ & $133$ & $18$ \\
$E_8$ & $248$ & $30$ \\
$F_4$ & $52$ & $9$ \\
$G_2$ & $14$ & $4$ \\
$B(0, n)$ & $n(2n+1)\mid 2n$  & $n + 1/2$ \\
\hline
\end{tabular}
\end{table}

\section{Holomorphic vertex superalgebras of central charge $8$ and $16$}

Throughout this section $V$ will be a holomorphic $\Z$-graded vertex superalgebra of central charge $c = 8$ or $c = 16$.

As in the proof of Theorem \ref{thm:zhu.main}, we consider the supercharacter
\begin{align*}
\chi_V(\tau) 
&= Z_V(\vac, \tau) = \str_V q^{L_0 - c/24} \\
&= q^{-c/24} \left( 1 + \sdim(V_1)q + \sdim(V_2) q^2 + \cdots \right),
\end{align*}
where $q = e^{2\pi i \tau}$. The supercharacter now satisfies
\[
\chi_V\left(A \cdot \tau\right) = \rho(A) \chi_V(\tau)
\]
for some character $\rho : SL_2(\Z) \rightarrow \C^\times$. In fact $\rho(T) = e^{2\pi i / 3}$ if $c=8$ and $\rho(T) = e^{-2\pi i / 3}$ if $c=16$, so $\rho(S) = 1$ since $S^2$ and $(ST)^3$ both act trivially.

The kernel in $SL_2(\Z)$ of $\rho$ is a congruence subgroup of level $3$ sometimes denoted $\Gamma^3$ \cite{Newman} \cite{Stein.modular.book}. It is a genus zero group, and the cube root of the $j$-invariant
\[
j(\tau)^{1/3} = q^{-1/3} \left( 1 + 248 q + \cdots \right)
\]
is a Hauptmodul.

Suppose $c=8$. Then $\chi_V(\tau) - j(\tau)^{1/3}$ is a modular function for $\Gamma^3$ holomorphic on the upper half plane (hence constant), and without constant term in its series expansion (hence zero in fact). It follows that $\sdim(V_1) = 248$.

The unique holomorphic vertex algebra of central charge $8$ is the lattice vertex algebra $V(E_8)$ \cite{DM04}. It is clear that $V \oplus V(E_8) \oplus V(E_8)$ is a holomorphic vertex superalgebra of central charge $24$. By Theorem \ref{thm:main.24} we have $V_1$ a sum of simple components, and
\begin{align*}
\frac{h^\vee}{k} = \frac{\sdim(V_1)+496-24}{24}
\end{align*}
for each simple component. A computer search, done using \texttt{Python}, yields the unique solution $V_1 = E_8$ at level $k=1$. We can conclude that $V = V(E_8)$, indeed the equality $V_1 = E_8$ at level $1$ induces an embedding $V(E_8) \subset V$, which yields a decomposition of the latter as a direct sum of irreducible modules of the former. Since $V(E_8)$ is holomorphic, we must have equality.

Now suppose $c=16$. Similar arguments as above show that $\sdim(V_1) = 496$ and, since $V \oplus V(E_8)$ is holomorphic of central charge $24$, the structure of $V_1$ is either $E_8 \oplus E_8$ or else $D_{16}$. Since these Lie algebras are pure even and are each the weight-one space of a unique holomorphic vertex algebra (the lattice vertex algebras $V(E_8) \oplus V(E_8)$ and $V(D_{16}^+)$) \cite{DM04}, the same reasoning as above implies that $V$ itself must be one of these two vertex algebras.
\begin{thm}\label{thm:main.8.16}
Every holomorphic $\Z$-graded vertex superalgebra of central charge $8$ or $16$, is purely even.
\end{thm}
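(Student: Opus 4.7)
The strategy is to reduce to Theorem \ref{thm:main.24} by tensoring $V$ with copies of the $E_8$ lattice vertex algebra $V(E_8)$: one first pins down $\sdim(V_1)$ using modular invariance, then applies Theorem \ref{thm:main.24} to the enlarged central-charge-$24$ vertex superalgebra, and finally identifies $V$ with a known holomorphic vertex algebra.

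The first step is a standard modular argument. Zhu's theorem implies the supercharacter $\chi_V(\tau) = q^{-c/24}(1 + \sdim(V_1) q + \cdots)$ transforms under $SL_2(\Z)$ by a character $\rho$ satisfying $\rho(S) = 1$ (since $S^2$ and $(ST)^3$ act trivially) and $\rho(T) = e^{-2\pi i c/24}$. For $c \in \{8, 16\}$ the kernel of $\rho$ is the genus-zero congruence subgroup $\Gamma^3$, whose Hauptmodul is $j^{1/3} = q^{-1/3}(1 + 248 q + \cdots)$. Then $\chi_V - j^{1/3}$ (for $c=8$), respectively $\chi_V - j^{2/3}$ (for $c=16$), is a modular function for $\Gamma^3$, holomorphic on the upper half plane and without constant term in its expansion, hence zero. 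Reading off the next coefficient gives $\sdim(V_1) = 248$ for $c=8$ and $\sdim(V_1) = 496$ for $c=16$.

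In the second step, $V \otimes V(E_8)^{\otimes 2}$ (for $c=8$) and $V \otimes V(E_8)$ (for $c=16$) are holomorphic $\Z$-graded vertex superalgebras of central charge $24$, with weight-one superdimension $744 \neq 24$. Theorem \ref{thm:main.24} therefore applies and forces each simple component of the weight-one Lie superalgebra to satisfy $h^\vee / k = 30$. Since the $E_8$ factors already realize this ratio at level $1$, one reruns the enumeration of Theorem \ref{thm:main.24} subject to the additional constraint on $\sdim(V_1)$. The only surviving possibilities are $V_1 \cong E_8$ at level $1$ (when $c=8$) and $V_1 \cong E_8 \oplus E_8$ at level $(1,1)$ or $V_1 \cong D_{16}$ at level $1$ (when $c=16$), all of which are purely even.

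For the final step, each such $V_1$ is the weight-one space of a unique holomorphic vertex algebra $W$, namely $V(E_8)$, $V(E_8)^{\otimes 2}$, or $V(D_{16}^+)$ (cf.\ \cite{DM04}); the affine subalgebra $\langle V_1\rangle \subset V$ extends to a copy of $W$ inside $V$, and $V$ decomposes into irreducible $W$-modules, all isomorphic to $W$ by holomorphy. The normalization $V_0 = \C\vac$ forces multiplicity one, so $V \cong W$ is purely even. The main obstacle is the end of step two, specifically the identification of the extension $\langle V_1\rangle \subset V$ with the full lattice vertex algebra $V(D_{16}^+)$ in the $D_{16}$ case: one must rule out other $\Z/2\oplus\Z/2$-graded extensions of $V_1(D_{16})$ consistent with $\Z$-grading and holomorphy at central charge $16$, which is handled by the classification of \cite{DM04}.
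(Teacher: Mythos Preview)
Your proposal is correct and follows essentially the same route as the paper: determine $\sdim(V_1)$ via the Hauptmodul for $\Gamma^3$, tensor with copies of $V(E_8)$ to reach central charge $24$, apply Theorem \ref{thm:main.24} to force $V_1$ to be one of $E_8$, $E_8\oplus E_8$, or $D_{16}$ at level $1$, and then invoke \cite{DM04} to identify $V$ with the corresponding lattice vertex algebra. Your explicit use of $j^{2/3}$ for $c=16$ and your flagging of the $D_{16}$ extension issue are both more detailed than the paper's ``similar arguments as above,'' but the substance is the same.
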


\section{Conclusion and outlook}

In this note we have ruled out the existence of non purely even holomorphic $\Z$-graded vertex superalgebras of central charge $8$ and $16$, and in the case of central charge $24$ we have put constraints on the possible Lie superalgebra structures of the weight-one space. Evidently not all of the resulting 1332 Lie superalgebras are realized as weight-one spaces, indeed of the 221 purely even Lie superalgebras on the list, only 69 are realized.

It would be interesting to apply the techniques used in \cite{Sch93} and \cite{EMS20} to reduce the list of candidate weight-one spaces. It would also be highly desirable to exhibit examples of (non purely even) holomorphic vertex superalgebras, or else to find a conceptual explanation for their non existence. The two main methods of construction of examples in the even case are (1) the lattice vertex algebra construction applied to self-dual even lattices, and (2) the construction of new holomorphic vertex algebras from old as orbifolds. Neither technique adapts straightforwardly to the present context, as vertex superalgebras associated with odd lattices are $(1/2)\Z$-graded (unless the conformal structure is modified, in which case the self-contragredient property is lost), and the orbifold construction also tends to produce $(1/2)\Z$-graded vertex superalgebras, and not $\Z$-graded ones. On the other hand examples of $\Z$-graded vertex superalgebras which are rational, $C_2$-cofinite, self-contragredient, etc., exist, $V_k(\mathfrak{osp}(1|2))$ for example. So there seems to be no obvious obstruction to the existence of holomorphic examples.

The case of holomorphic $\Z$-graded vertex superalgebras $V$ of central charge $24$ satisfying $\sdim(V_1) = 24$, in which case the Killing form of $V_1$ necessarily vanishes, remains to be investigated.

The definition of holomorphic vertex algebra (cf. Definition \ref{def:holom} above) includes a number of technical conditions which experience has proven to be appropriate in the pure even case. Examples of vertex superalgebras, such as $V_1(\g)$ for $\g$ either $G(3)$ or $F(4)$ \cite{KW2001} \cite{Ai.Lin}, suggest however that the appropriate technical conditions in the supersymmetric case might be different. The appearance of mock modular forms as characters of affine superalgebras, discovered by Kac and Wakimoto \cite{KW2001} and now the subject of an extensive literature, is another indication that the supersymmetric case is substantially different than the even case. We hope to continue to investigate these matters in future work.

\bibliographystyle{plain}
\bibliography{refs}

\end{document}